\newcommand{\footremember}[2]{%
    \footnote{#2}
    \newcounter{#1}
    \setcounter{#1}{\value{footnote}}%
}
\newtheorem{remark}{Remark}
\newtheorem{assumption}{Assumption}
\newtheorem{definition}{Definition}
\newtheorem{theorem}{Theorem}
\newtheorem{proof}{Proof}
\newtheorem{lemma}{Lemma}
\newtheorem{proposition}{Proposition}
\newtheorem{corollary}{Corollary}
\crefname{hypothesis}{Hypothesis}{Hypotheses}
\crefname{assumption}{Assumption}{Assumption}
\newcommand{\R}{\mathbb{R}}
\newcommand{\N}{\mathbb{N}}
\newcommand{\EE}{\mathbb{E}}
\newcommand{\E}{\mathcal{E}}
\newcommand{\YY}{\mathcal{Y}}
\newcommand{\hH}{H_{\textup{av}}^1(D)}
\newcommand{\PP}{\mathbb{P}}
\newcommand{\Go}{\partial D}%{\partial X}
\newcommand{\tr}{\text{tr}}
\renewcommand{\d}{\textup{d}}
\newcommand{\n}{\textup{n}}
\newcommand{\dx}{\textup{d}x}
\newcommand{\dr}{\textup{d}r}
\newcommand{\ds}{\textup{d}s}
\newcommand{\dt}{\textup{d}t}
\DeclareMathOperator*{\esssup}{ess\,sup}
\newcommand{\id}{\operatorname{id}}
\newcommand{\kin}{{\kappa_{\text{in}}}}
\newcommand{\kout}{{\kappa_{\text{out}}}}
\newcommand{\Din}{{D_{\text{in}}}}
\newcommand{\Dout}{{D_{\text{out}}}}
\newcommand{\divv}{{\textup{div}}}
\renewcommand{\L}{\textup{L}}
\newcommand{\UU}{{\mathcal{U}}}
\newcommand{\kappaHeart}{{\kappa_{\textup{heart}}}}
\newcommand{\kappaLungs}{{\kappa_{\textup{lungs}}}}
\newcommand{\kappaTrunk}{{\kappa_{\textup{trunk}}}}
\newcommand{\kappaHeartdet}{{\bar{\kappa}_{\textup{heart}}}}
\newcommand{\kappaLungsdet}{{\bar{\kappa}_{\textup{lungs}}}}
\newcommand{\kappaTrunkdet}{{\bar{\kappa}_{\textup{trunk}}}}
\begin{document}

\title{Stochastic approximation for optimization in shape spaces}
  
\author{Caroline Geiersbach\footremember{a}{University of Vienna, \texttt{caroline.geiersbach@univie.ac.at}} \and Estefania Loayza-Romero\footremember{b}{Chemnitz University of Technology, \texttt{estefania.loayza@math.tu-chemnitz.de}} \and Kathrin Welker\footremember{c}{Helmut-Schmidt-University / University of the Federal Armed Forces Hamburg, \texttt{welker@hsu-hh.de}}
}

\date{}

\maketitle

\abstract{
In this work, we present a novel approach for solving stochastic shape optimization problems. Our method is the extension of the classical stochastic gradient method to infinite-dimensional shape manifolds. We prove convergence of the method on Riemannian manifolds and then make the connection to shape spaces. The method is demonstrated on a model shape optimization problem from interface identification. Uncertainty arises in the form of a random partial differential equation, where underlying probability distributions of the random coefficients and inputs are assumed to be known. We verify some conditions for convergence for the model problem and demonstrate the method numerically.
}

\section{Introduction}
Shape optimization involves the identification of a shape with optimal response properties. This subject has enjoyed active research for decades due to its many applications, particularly in engineering; see for instance \cite{Pironneau1984, Sokolowski1991} for an introduction. A challenge in shape optimization is in the modeling of shapes, which do not inherently have a vector space structure. Various models of the space of shapes and associated metrics have been used in the literature. 
Recently,  \cite{Schulz}	made a link between shape calculus and shape manifolds, and thus enabled the usage of optimization techniques on manifolds in the context of shape optimization. One possible approach is to cast the sets of shapes in a Riemannian viewpoint, where each shape is a point on an abstract manifold equipped with a notion of distances between shapes 
(cf., e.g.,~\cite{MichorMumford2,MichorMumford1,srivastava2016functional,younes2010shapes}).
In \cite{MichorMumford}, a survey of various suitable inner products is given, e.g., the curvature weighted metric and the Sobolev metric. 
From a theoretical and computational point of view, it is attractive to optimize in Riemannian shape manifolds because algorithmic ideas from \cite{Absil} can be combined with approaches from differential geometry.
In contrast to \cite{Absil}, in which only optimization on finite dimensional manifolds is discussed, \cite{Ring2012} considers also infinite-dimensional manifolds.
In this setting, the shape derivative can be used to solve such shape optimization problems using the gradient descent method. In the past, e.g., \cite{Delfour-Zolesio-2001,Sokolowski1991}, major effort in shape calculus has been devoted towards expressions for shape derivatives in the Hadamard form, i.e., in the boundary integral form. An equivalent and intermediate result in the process of deriving Hadamard expressions is a volume expression of the shape derivative, called the weak formulation. One usually has to require additional regularity assumptions in order to transform volume into surface forms.
In addition to saving analytical effort, this makes volume expressions preferable to Hadamard forms, which is utilized in e.g.~\cite{Langer-2015}. One possible approach to use these formulations is given in \cite{SchulzSiebenbornWelker2015:2}; an inner product called the Steklov--Poincar\'{e} metric is proposed, which we also use in this work.

Until recently,  the models used in the area of shape optimization have been deterministic, i.e., all physical quantities were supposed to be known exactly. However, many relevant problems involve a constraint in the form of a partial differential equation (PDE), which contains inputs or material properties that may be unknown or subject to uncertainty.  PDEs under uncertainty have been well-investigated in the literature; see for instance \cite{Lord2014} for an introduction and \cite{Martinez-Frutos2018} for their application to optimization, including their use in problems in shape optimization with the level set method.  Increasingly, stochastic models are being used in shape optimization with the goal of obtaining more robust solutions.  A number of works has focused on structural optimization with either random Lam\'{e} parameters or forcing \cite{allaire2015deterministic,Atwal2012,Conti2008, Conti2018, Dambrine2015, Martinez-Frutos2016}. Stochastic models have also handled uncertainty in the geometry of the domain \cite{Bruegger2018, Harbrecht2018, Liu2017}. To ensure well-posedness of the stochastic problem, either an order must be defined on the relevant random variables, as in \cite{Conti2018}, or the problem needs to be transformed to a deterministic one by means of a probability measure. One possibility is to compute the worst case design \cite{Bellido2017,Dambrine2016}. Another possibility is to use first and second order moments to cast the problem in a deterministic setting \cite{Dambrine2015}; this is particularly relevant if the probability distribution of the underlying random variable is unknown. The sum of expectation and standard deviation is sometimes used \cite{Martinez-Frutos2016}, but this fails to be a coherent risk measure. The most popular choice in the literature is the (risk-neutral) measure expectation. This measure, which we also consider in this work, is appropriate when the cost associated with the shape's failure is of little concern. For other choices for probability measures, a review can be found in \cite{Rockafellar2015}.
 
The development of efficient algorithms for shape optimization under uncertainty is an active area of research. If the number of possible scenarios in the underlying probability space is small, then the optimization problem can be solved over the entire set of scenarios. This approach is not relevant for most applications, as it becomes intractable if the random variable has more than a few scenarios. Algorithmic approaches for shape optimization problems under uncertainty involve the use of a standard deterministic solver in combination with either a discretization of the stochastic space or using an ensemble/sample from the stochastic space. The former approach includes the stochastic Galerkin method, used on random domains in \cite{Eigel2019} and polynomial chaos, applied to topology optimization in  \cite{Keshavarzzadeh2017}. Ensemble-based approaches involve taking independent realizations or carefully chosen quadrature points of the random variable. The most basic method is sample average approximation (SAA), also known as the Monte Carlo method, where a random sample is generated once and the the original problem is replaced by the sample average problem over the fixed sample.

Recently, stochastic approximation (SA) methods have been proposed to efficiently solve PDE-constrained optimization problems involving uncertainty \cite{Geiersbach2019, Geiersbach2020, Martin2018, Haber2012}. This approach is fundamentally different from the methods already mentioned, since sampling is performed dynamically as part of the optimization procedure. Because of its use of partial function information in the form a so-called stochastic gradient, it has a low computational cost when compared to other methods. In this paper, we present a novel use of the stochastic gradient method, namely for PDE-constrained shape optimization problems under uncertainty. In \cref{sec:Algorithm}, we prove convergence of the method on a Riemannian manifold based on the work on finite-dimensional manifolds by \cite{Bonnabel2011} and infinite-dimensional Hilbert spaces by \cite{Geiersbach2020}. Additionally, we make the connection to optimization on shape spaces. In \cref{sec:model}, we develop a model problem, which is motivated by applications to electrical impedance tomography. Moreover, we verify shape differentiability for the model problem as well as bounds on the second moment of the stochastic gradient, which are necessary for the convergence of the algorithm presented in \cref{sec:Algorithm}. We show a numerical simulation in \cref{sec:Numerics}. Closing remarks are presented in \cref{sec:conclusion}.

\section{Stochastic approximation in shape spaces}
\label{sec:Algorithm}
The principal aim of this section is the presentation of stochastic approximation to iteratively solve a shape optimization problem containing uncertain parameters and inputs in a suitable shape space.  The section is organized as follows. First, in \cref{subsection_infinite-dimensional-MF} we highlight some of the difficulties in working with infinite-dimensional shape spaces. Our main result is in \cref{subsection:SGAlgorithmManifolds}, where we prove convergence of the stochastic gradient method on a Riemannian manifold. Then, we introduce a manifold of shapes with an appropriate metric (cf.~\cref{subsection_ShapeSpaces}). In \cref{subsection_ShapeCalcStochModeling}, we give new results for shape calculus combined with stochastic modeling. 

\subsection{Infinite-dimensional shape manifolds}
\label{subsection_infinite-dimensional-MF}
The shape space we use in this paper is the space of plane unparametrized curves, i.e., the space of all smooth embeddings of the unit circle in the plane modulo reparametrizations, which is an infinite-dimensional manifold and is usually denoted as $B_e$\footnote{This space is defined later in (\ref{B_e}).}. Our choice of this space comes from the fact that in shape optimization, the set of permissible shapes generally does not allow a vector space structure. This is a central difficulty in the formulation of efficient optimization methods for these applications. In particular, without a vector space structure, there is no obvious distance measure. If one cannot work in vector spaces, shape spaces that allow a Riemannian structure are the next best option. However, they come with additional difficulties; in our choice of shape space, we are working with an infinite-dimensional manifold.
As mentioned in~\cite{bauer2014overview}, while working in this kind of space, many difficulties arise and there are still open questions. For one, most of the Riemannian metrics defined over these spaces are weak and hence the gradient is not necessarily defined.
Furthermore, the existence and uniqueness of solutions of the geodesic equation are not guaranteed and need to be checked for each metric; this means in some cases the exponential map is not well-defined. In some pathological cases, it is also possible that the exponential map fails to be a diffeomorphism on any neighborhood, see, e.g.,~\cite{constantin2007geodesic}. Finally, any assumption regarding the injectivity radius is challenging to prove in practice, and to the authors' knowledge has not been studied for the space of plane curves. 

Another problem involves the fact that distances on an infinite-dimensional Riemannian manifold can be degenerate. 
In \cite{MichorMumford1}, it is shown that the reparametrization invariant $L^2$-metric on the infinite-dimensional manifold of smooth planar curves induces a geodesic distance equal to zero. 
In \cite{MichorMumford1}, a curvature weighted $L^2$-metric is employed as a remedy and it is proven that the vanishing phenomenon does not occur for this metric. Several Riemannian metrics on this shape space are examined in further publications, e.g., \cite{BauerHarmsMichor,MichorMumford2,MichorMumford}. All these metrics arise from the $L^2$-metric by putting weights, derivatives or both in it. In this manner, we get three groups of metrics: almost local metrics (cf.~\cite{Bauer,BauerHarmsMichor_SobolevII,MichorMumford}), Sobolev metrics (cf.~\cite{BauerHarmsMichor,MichorMumford}) and weighted Sobolev metrics (cf.~\cite{BauerHarmsMichor_SobolevII}).
It can be shown that all these metrics do not induce the phenomenon of vanishing geodesic distance under special assumptions, which are given in the publications mentioned.  Summarizing, working with infinite-dimensional manifolds is very challenging and remains an active area of research.

\subsection{Stochastic gradient method on manifolds}
\label{subsection:SGAlgorithmManifolds}
In the following, we introduce notation from differential geometry and probability theory; for detailed definitions of the introduced objects, we refer to the literature \cite{Lang,Lee,Gut2013}.

Let $(\mathcal{U}, G)$ be a (possibly infinite-dimensional) connected manifold equipped with a Riemannian metric, i.e., a smoothly varying family of inner products $G =(G_u)_{u \in \mathcal{U}}$. Let $\lVert \cdot \rVert:= \sqrt{G(\cdot,\cdot)}$ denote the induced norm. 
The triple $(\Omega, \mathcal{F}, \PP)$ denotes a probability space, where $\mathcal{F} \subset 2^{\Omega}$ is the $\sigma$-algebra of events and $\PP\colon \Omega \rightarrow [0,1]$ is a probability measure. A random vector $\xi: \Omega \rightarrow \Xi \subset \R^m$ is given; sometimes we use the notation $\xi \in \Xi$ to denote a realization of the random vector. We are focused on problems of the form
$$\min_{u \in \mathcal{U}} \left\lbrace j(u):= \EE[J(u,\xi)] = \int_\Omega J(u,\xi(\omega)) \,\d\PP(\omega)\right\rbrace,$$
where $J\colon \mathcal{U} \times \Xi \rightarrow \R$ is a functional such that $\omega \mapsto J(u,\xi(\omega))$ is well-defined and $\PP$-integrable for all $u \in \mathcal{U}$, i.e., the expectation above is finite on the manifold. 
We denote the tangent space at a point $u \in \mathcal{U}$ by $T_u \mathcal{U}$, defined in its geometric version as $T_u\UU=\{ c:\R \rightarrow \UU: c \text{ differentiable}, c(0) =u\}/\overset{u}{\sim}$, where $c_1 \overset{u}{\sim} c_2$ means $c_1$ is $u$-equivalent\footnote{If $\{(U_\alpha, \phi_\alpha)\}_\alpha$ is the atlas of $\UU$, two differentiable curves $c_1, c_2:\R \rightarrow \UU$ with $c_1(0) = c_2(0) =u$ are called $u$-equivalent if $\tfrac{\d}{\dt}\phi_{\alpha}(c_1(t))\vert_{t=0}  =\tfrac{\d}{\dt} \phi_{\alpha}(c_2(t))\vert_{t=0}$ holds for all $\alpha$ with $u \in U_\alpha$.} 
 to $c_2$. The derivative of a scalar field $j\colon\mathcal{U} \rightarrow \R$ at $u$ in the direction $v  \in T_u \mathcal{U}$ is defined by the pushforward. For each point $u\in \mathcal{U}$, the pushforward associated with $j$ is given by the map
$$(j_\ast)_u\colon T_u\mathcal{U}\to \R,$$
with
$$(j_\ast)_uv:= \frac{\d}{\d t} j(c(t)) \vert_{t=0} = (j \circ c)'(0)$$
for $v=\dot{c}(t)\in T_u\mathcal{U}$, where $c\colon I\to \mathcal{U}$ is a differentiable curve and $I\subset\R$ is an interval.

A Riemannian gradient $\nabla j(u) \in T_u \mathcal{U}$ is defined by the relation
$$
(j_\ast)_uw
 = G_u(\nabla j(u), w) \quad \forall w \in T_u \mathcal{U}.$$
Of course, in an infinite-dimensional setting, there is no guarantee that the gradient is well-defined, as already mentioned in \cref{subsection_infinite-dimensional-MF}.
The Hessian of $j$ at $u$ is defined by $\text{Hess}j(u)[v]:=\nabla_v^{\text{cov}} j(u)$, where  $\nabla_v^{\text{cov}}$ denotes the covariant derivative in the direction $v$. We now define the stochastic gradient.
\begin{definition}\label{definition:stochastic-gradient}
Let $J:\mathcal{U} \times \Xi \rightarrow \R$ be a functional defined on the manifold $(\mathcal{U}, G)$ and $j:\mathcal{U} \rightarrow \R$ be given by $j(u) = \EE[J(u,\xi)]$. For a fixed realization $\xi \in \Xi$, set $J_\xi(\cdot):=J(\cdot,\xi)$. The \emph{stochastic gradient} of $j$ in a point $u \in \mathcal{U}$ is a $\PP$-integrable function $\nabla J:\mathcal{U} \times \Xi \rightarrow T \mathcal{U}$ such that\\
1) For almost every $\xi \in \Xi$, $
((J_\xi)_\ast)_uw
 = G_u (\nabla J(u,\xi),w)$ for all  $w\in T_u \mathcal{U},$ \\
2) $\EE[\nabla J(u,\xi)] = \nabla j(u).$
\end{definition}
In a slight abuse of notation, we will always use $\nabla J(u,\xi)$ to denote the gradient with respect to the $u$ variable. 

In order to locally reduce an optimization problem on a manifold to an optimization problem on its tangent space, we need the concept of the exponential map, and its approximation, the so-called retraction. We denote the exponential mapping at $u$ by $\exp_{u}\colon T_{u}\mathcal{U}\to \mathcal{U},\,v\mapsto \exp_{u}(v)$, which assigns to every tangent vector $v$ the value $\gamma(1)$ of the geodesic $\gamma\colon [0,1]\to \mathcal{U}$ satisfying $\gamma(0) = u$ and $\dot{\gamma}(0) = v.$
A retraction is denoted by $\mathcal{R}_u\colon T_u\mathcal{U}\to \mathcal{U}$ satisfying $\mathcal{R}_u(0_u)=u$ and the so-called local rigidity condition $d\mathcal{R}_u(0_u)=\text{id}_{T_u\mathcal{U}}$, where $0_u$ denotes the zero element of $T_u \UU$. 

We now formulate a stochastic gradient method on manifolds. 
This method dates back to a paper by Robbins and Monro \cite{Robbins1951}, where an iterative method for finding the root of a function was introduced, which used only estimates of the function values. The main advantages of this method include its low memory requirements, low computational complexity, as well as ease of implementation along deterministic gradient-based solvers. Stochastic gradient methods have been widely used in applications and its study on manifolds remains an active area of research~\cite{Bonnabel2011,ZhangSra2016}. 

The algorithm is shown in \cref{alg:stochastic_descent-manifold}. We will work with the standard step-size rule
\begin{equation}
 \label{eq:Robbins-Monro-step-sizes}
 t_n \geq 0, \quad \sum_{n=1}^\infty t_n = \infty, \quad \sum_{n=1}^\infty t_n^2 < \infty.
\end{equation}
 
\begin{remark}
This ``Robbins--Monro'' step-size rule was originally introduced in the paper \cite{Robbins1951}. The rule provides the appropriate scaling for the stochastic gradient in order to ensure sufficient decrease (on average) in the objective function, while asymptotically dampening variance. To guarantee convergence to a stationary point for a stochastic gradient method with no other variance reduction technique, this rule is crucial. Other choices of step-sizes, such as those obtained using an Armijo backtracking procedure, generally fail in stochastic approximation, as demonstrated in \cite[Example 1.1]{Geiersbach2020a}.
\end{remark}

\begin{algorithm}
	\begin{algorithmic}[1]
		\STATE \textbf{Initialization:} Choose $u_1 \in \mathcal{U}$
		\FOR{$n=1,2,\dots$}
		\STATE Generate $\xi_n \in \Xi$, independent of $\xi_1, \dots, \xi_{n-1}$
		%\STATE Choose $t_n$ satisfying \cref{eq:Robbins-Monro-step-sizes}
		\STATE Set $u_{n+1} := \exp_{u_n}(-t_n \nabla J(u_n,\xi_n))$
		\ENDFOR
	\end{algorithmic}
	\caption{Stochastic gradient method on manifolds}
	\label{alg:stochastic_descent-manifold}
\end{algorithm}

Now, we analyze the convergence of \cref{alg:stochastic_descent-manifold}.
Let the length of a curve $c$ be denoted by $\L(c) = \int_0^1 \|c'(t)\|\dt$. Then the distance $\d:\mathcal{U}\times\mathcal{U}\rightarrow \R$ between points $u, \tilde{u}$ on the manifold is given by
	\[
	\d(u,\tilde{u}) = \inf \{\L(c) \colon c  \text{ is a piecewise smooth curve on } \mathcal{U} \text{ from } u \text{ to } \tilde{u}\}.
	\]
	We denote the parallel transport along the geodesic $\gamma\colon [0,1]\to \mathcal{U}$ by $P_{\alpha,\beta}\colon T_{\gamma(\alpha)}\mathcal{U}\to T_{\gamma(\beta)}\mathcal{U}$. 
	The injectivity radius $i_u$ at a point $u \in \mathcal{U}$ is defined as $i_u := \sup\{r > 0 : \exp_u \vert_{B_r(0_u)} \text{ is a diffeomorphism}\}$, where $B_r(0_u) \subset T_u \mathcal{U}$ is a ball with radius $r$. The injectivity radius $i(\mathcal{U})$ of the manifold $\mathcal{U}$ is defined by
	$$i(\mathcal{U}) := \inf_{u \in \mathcal{U}} i_u.$$ 
	
To show convergence, we make the following fundamental assumptions about the manifold $(\mathcal{U},G)$. 
	\begin{assumption}
		\label{assump:manifold}
		We assume that
		\begin{enumerate}
			\item \label{assump:gradient} For all $u \in \mathcal{U}$ and almost all $\xi \in \Xi$, the gradient $\nabla J(u,\xi)$ exists;
			\item \label{assump:non_deg_dist} The distance $\d(\cdot,\cdot)$ is non-degenerate;
\item \label{assump:injectivity-radius} The manifold $(\mathcal{U},G)$ has a positive injectivity radius $i(\mathcal{U})$;
\item \label{assump:exp_geo} For all $u\in\mathcal{U}$ and $\tilde{u} \in B_{i_u}(0_u)$, the minimizing geodesic between $u$ and $\tilde{u}$ is completely contained in $B_{i_u}(0_u)$. 
		\end{enumerate}
	\end{assumption}
Thanks to \cref{assump:manifold}, the geodesic between $u$ and $\tilde{u}$ such that $\d(u,\tilde{u}) \leq i(\mathcal{U})$ is uniquely defined  and there exists a $v \in T_u \mathcal{U}$ such that $\tilde{u} = \exp_u(v)$.

\begin{remark}
As mentioned in \cref{subsection_infinite-dimensional-MF}, these assumptions, while very natural for finite-dimensional manifolds, are not automatically satisfied for \discretionary{infinite-dimen-}{sional}{infinite-dimensional} manifolds. 
At first glance, \cref{assump:exp_geo} of~\cref{assump:manifold} appears to be superfluous. However, this is only true for finite dimensional manifolds. There are examples as stated in~\cite[p.~19]{escherright}, in which the minimizing geodesics can leave the neighborhood where the exponential map is a diffeomorphism.  We remark that the requirement that the manifold has a positive injectivity radius is quite strong for infinite-dimensional manifolds. 
\end{remark}

\begin{definition}
 \label{def:Lipschitz-gradient}
Let $(\mathcal{U}, G)$  be a connected Riemannian manifold with a positive injectivity radius. We call a function $j:\mathcal{U} \rightarrow \R$ \emph{$L$-Lipschitz continuously differentiable} if its gradient exists for all $u\in\mathcal{U}$, and there exists a $L>0$ such that for all $u,\tilde{u} \in \mathcal{U}$ with $\d(u,\tilde{u}) \leq i(\mathcal{U})$,
 \begin{equation}\label{eq:Lipschitz-condition}
\lVert P_{1,0}\nabla j(\tilde{u})-\nabla j(u)\rVert \leq L \d(u,\tilde{u}),
\end{equation}
where $P_{1,0}:T_{\gamma(1)} \mathcal{U} \rightarrow T_{\gamma(0)} \mathcal{U}$  is the parallel transport along the unique geodesic such that $\gamma(0)=u$ and $\gamma(1) = \tilde{u}.$
\end{definition}

Now we are in the position to present the first result, which is needed for the convergence proof. 

\begin{theorem}\label{theorem:Lipschitzcondition}
Let  $(\mathcal{U}, G)$ satisfy \cref{assump:manifold} and let $u,\tilde{u} \in \mathcal{U}$ be such that $\d(u,\tilde{u}) \leq i(\mathcal{U})$. If $j$ is $L$-Lipschitz continuously differentiable, then with $v = \exp_u^{-1}(\tilde{u})$ it follows that 
\begin{equation}
\label{eq:LS-condition}
j(\tilde{u}) -j(u)\leq G_u(\nabla j(u),v) + \frac{L}{2}\|v\|^2.
	\end{equation}
\end{theorem}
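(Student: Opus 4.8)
The plan is to adapt the classical Euclidean argument that bounds $j(\tilde{u}) - j(u)$ via the fundamental theorem of calculus along the segment joining the two points, replacing the straight line by the unique minimizing geodesic $\gamma$ guaranteed by \cref{assump:manifold}. Set $\gamma(0) = u$, $\gamma(1) = \tilde{u}$, and $\dot\gamma(0) = v = \exp_u^{-1}(\tilde{u})$. First I would define the scalar function $g(t) := j(\gamma(t))$ on $[0,1]$, so that $g(0) = j(u)$ and $g(1) = j(\tilde{u})$. Differentiating and using the definition of the Riemannian gradient together with the fact that $\gamma$ is a geodesic (so $\dot\gamma(t)$ is the parallel transport of $v$ along $\gamma$, with constant norm $\|v\|$), I get $g'(t) = G_{\gamma(t)}(\nabla j(\gamma(t)), \dot\gamma(t))$. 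Then I would write
\begin{equation*}
j(\tilde{u}) - j(u) = g(1) - g(0) = \int_0^1 g'(t)\,\dt = \int_0^1 G_{\gamma(t)}(\nabla j(\gamma(t)), \dot\gamma(t))\,\dt.
\end{equation*}

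The next step is to introduce the desired term $G_u(\nabla j(u), v)$ by adding and subtracting it. Since parallel transport $P_{t,0}$ along the geodesic is a linear isometry from $T_{\gamma(t)}\mathcal{U}$ to $T_{u}\mathcal{U}$, and since it carries $\dot\gamma(t)$ back to $\dot\gamma(0) = v$, I would rewrite the integrand inner product on the base tangent space $T_u\mathcal{U}$ as $G_u(P_{t,0}\nabla j(\gamma(t)), v)$. Comparing with the constant integrand $G_u(\nabla j(u), v)$, whose integral over $[0,1]$ is exactly $G_u(\nabla j(u), v)$, the difference becomes
\begin{equation*}
j(\tilde{u}) - j(u) - G_u(\nabla j(u), v) = \int_0^1 G_u\big(P_{t,0}\nabla j(\gamma(t)) - \nabla j(u),\, v\big)\,\dt.
\end{equation*}
Applying Cauchy--Schwarz for $G_u$ and then the $L$-Lipschitz estimate \eqref{eq:Lipschitz-condition} to the point $\gamma(t)$ (noting $\d(u,\gamma(t)) \leq \d(u,\tilde{u}) \leq i(\mathcal{U})$ since $\gamma$ is minimizing and stays inside the injectivity ball by \cref{assump:exp_geo}), I bound the transported gradient difference by $L\,\d(u,\gamma(t)) = L t\,\|v\|$, using that arc length along a unit-speed reparametrization scales linearly and $\d(u,\gamma(t)) = t\,\|v\|$ along the minimizing geodesic. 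This yields an integrand bounded by $L t \|v\|^2$, whose integral over $[0,1]$ is $\tfrac{L}{2}\|v\|^2$, giving the claim.

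The main obstacle I expect is justifying the manipulations rigorously in the infinite-dimensional weak-metric setting. Two points need care: the differentiation under the integral sign / smoothness of $t \mapsto j(\gamma(t))$, which rests on the gradient existing along the geodesic (\cref{assump:gradient}) and on the geodesic being genuinely well-defined and unique, and the identification of $\d(u,\gamma(t))$ with $t\|v\|$, which uses that $\gamma$ is the \emph{minimizing} geodesic contained in the injectivity ball (\cref{assump:non_deg_dist} and \cref{assump:exp_geo}) rather than merely a critical curve. The isometry property of parallel transport and the compatibility of the metric with the covariant derivative, standard in finite dimensions, must be invoked as the structural facts that make the Lipschitz hypothesis \eqref{eq:Lipschitz-condition} applicable pointwise along the geodesic; assembling these is where the infinite-dimensional subtleties flagged in \cref{subsection_infinite-dimensional-MF} are absorbed into \cref{assump:manifold}.
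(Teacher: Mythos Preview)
Your proposal is correct and follows essentially the same approach as the paper: define a scalar function along the geodesic $t\mapsto j(\exp_u(tv))$, differentiate using the parallel-transport description of $\dot\gamma(t)$, pull the inner product back to $T_u\mathcal{U}$ via the isometry $P_{t,0}$, and bound the integrand $G_u(P_{t,0}\nabla j(\gamma(t))-\nabla j(u),v)$ by $Lt\|v\|^2$ using \eqref{eq:Lipschitz-condition} together with $\d(u,\gamma(t))=t\|v\|$. The only cosmetic difference is that the paper writes the fundamental theorem of calculus in the form $\phi(1)-\phi(0)=\phi'(0)+\int_0^1(\phi'(t)-\phi'(0))\,\dt$, whereas you integrate $g'$ and then add and subtract the constant term; these are the same computation.
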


\begin{proof}
We consider the mapping $\phi\colon [0,1]\to\ \R,\,t\mapsto j(\exp_u(tv))$. The derivative of a geodesic curve $\gamma$ is given by $\gamma'(t)=P_{0,t}\gamma'(0)$ (cf. \cite[p.~310]{FerSva}). By the chain rule, we get 
\begin{equation}
\label{phi_derivative}
\phi'(t)=(j_\ast)_{\exp_{u}(t v)}P_{0,t}v =  G_{\gamma(t)}(\nabla j(\exp_u(tv)),P_{0,t}v).
\end{equation}
Since the parallel transport is an isometry, we have
$$G_{{\gamma(t)}}(\nabla j( \exp_u(tv)),P_{0,t}v) = G_{u}(P_{t,0} \nabla j( \exp_u(tv)), P_{t,0} P_{0,t}v),$$ and additionally $P_{0,t}^{-1}=P_{t,0}$ (cf.~\cite[p.~308]{FerSva}), so \cref{phi_derivative} gives
\begin{equation}
\label{phi_derivative_isom}
\phi'(t)=G_{u}(P_{t,0}\nabla j(\exp_u(tv)),v).
\end{equation}

 Thanks to~\cref{assump:manifold},~ \cref{assump:injectivity-radius}, the exponential mapping has a well-defined inverse $\exp_u^{-1}(\tilde{u}):\mathcal{U} \rightarrow T_u \mathcal{U}$ such that $\d(\tilde{u},u) = \lVert \exp_u^{-1}(\tilde{u})\rVert$. Thus $\d(\exp_u(tv), u) = t\lVert v\rVert$. 
 Now we rewrite the fundamental theorem of calculus in the form
\begin{equation}
\label{fund_the_calculus}
\phi(1)-\phi(0)=\phi'(0)+\int_{0}^{1}\phi'(t)-\phi'(0)\,\dt
\end{equation}
with the aim of invoking \eqref{eq:Lipschitz-condition}. With $\phi(0)= j(u)$, $\phi(1)= j(\exp_u(v))$, and $\phi'(0)= G_{u}(\nabla j(u),v)$, we get by \eqref{fund_the_calculus} that
\begin{align*}
j(\exp_u(v))-j(u) & = G_u(\nabla j(u),v) +\int_{0}^{1}  G_u(P_{t,0}\nabla j( \exp_u(tv)) - \nabla j(u),v) \, \dt\\
&\leq G_u(\nabla j(u),v)+ \int_{0}^{1}  \lVert P_{t,0}\nabla j( \exp_u(tv)) - \nabla j(u)\rVert \lVert v\rVert \,\dt\\
&\hspace{-.1cm}\stackrel{\cref{eq:Lipschitz-condition}}{\leq} G_u(\nabla j(u),v) + \int_{0}^{1} L t\|v \|^2  \,\dt\\
&= G_u(\nabla j(u),v) + \frac{L}{2}\|v\|^2.
\end{align*}
\end{proof}

For the convergence proof, we recall that a sequence $\{ \mathcal{F}_n\}$ of increasing sub-$\sigma$-algebras of $\mathcal{F}$ is called a filtration. A stochastic process $\{\beta_n\}$ is said to be adapted to the filtration if $\beta_n$ is $\mathcal{F}_n$-measurable for all $n$. If  $\mathcal{F}_n = \sigma(\beta_1, \dots, \beta_n),$\footnote{The $\sigma$-algebra generated by a random variable $\beta:\Omega \rightarrow \R$ is given by $\sigma(\beta) = \{ \beta^{-1}(B): B \in \mathcal{B}\}$, where $\mathcal{B}$ is the Borel $\sigma$-algebra on $\R$. Analogously, the $\sigma$-algebra generated by the set of random variables $\{ \beta_1, \dots, \beta_n\}$ is the smallest $\sigma$-algebra such that $\beta_i$ is measurable for all $i=1, \dots, n.$}
we call $\{ \mathcal{F}_n\}$ the natural filtration. Furthermore, we define for a $\PP$-integrable random variable $\beta:\Omega \rightarrow \R$ the conditional expectation $\EE[\beta | \mathcal{F}_n]$, which is a random variable that is $\mathcal{F}_n$-measurable and satisfies $\int_A \EE[\beta(\omega) | \mathcal{F}_n] \, \d \PP(\omega) = \int_A \beta(\omega) \,\d \PP(\omega)$ for all $A \in \mathcal{F}_n$. Sometimes we use the notation $\EE_{\xi}[\cdot]$ to emphasize that the expectation is computed with respect to $\xi.$  If an event $F \in \mathcal{F}$ is satisfied with probability one, i.e.~$\PP[F] = 1$, we say $F$ occurs almost surely and denote this with a.s. We will use the following results, the proofs of which can be found in \cite{Pflug1996}, Appendix L and \cite{Metivier2011}, Theorem 9.4, respectively. We use the notation $\beta^- := \max \{ 0 -\beta\}.$

\begin{lemma}[Robbins--Siegmund] \label{lemma:Robbins-Siegmund}
Let $\{\mathcal{F}_n\}$ be an increasing sequence of $\sigma$-algebras and 
$v_n$, $a_n$, $b_n$, $c_n$ nonnegative random variables adapted to $\mathcal{F}_n$ for all $n$. If
\begin{equation}\label{eq:Robbins-Siegmund}
\EE[v_{n+1} | \mathcal{F}_n] \leq v_n(1+a_n)+ b_n-c_n,
\end{equation}
and $\sum_{n=1}^\infty a_n < \infty,  \sum_{n=1}^\infty b_n < \infty$ a.s., then with probability one, $\{v_n\}$ is convergent and $\sum_{n=1}^\infty c_n < \infty$. 
\end{lemma}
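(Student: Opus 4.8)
The plan is to turn the ``almost supermartingale'' inequality \eqref{eq:Robbins-Siegmund} into an honest supermartingale by dividing out the multiplicative factors $(1+a_n)$ and absorbing the additive $b_n$-terms into a predictable compensator. Concretely, I would introduce the discount factors
$$\pi_n := \prod_{k=1}^{n}(1+a_k)^{-1}, \qquad \pi_0 := 1,$$
which are $\mathcal{F}_n$-measurable and satisfy $0 < \pi_n \le 1$ with $\pi_n$ nonincreasing. Because $\sum_n a_n < \infty$ a.s., one has $\sum_k \log(1+a_k) \le \sum_k a_k < \infty$, so $\pi_n \to \pi_\infty > 0$ a.s. I would then set
$$W_n := \pi_{n-1} v_n - \sum_{k=1}^{n-1}\pi_k b_k.$$
Using $\pi_n(1+a_n) = \pi_{n-1}$, multiplying \eqref{eq:Robbins-Siegmund} by the $\mathcal{F}_n$-measurable factor $\pi_n$ and subtracting $\sum_{k=1}^{n}\pi_k b_k$ yields, after the $\pi_n b_n$-terms cancel,
$$\EE[W_{n+1}\mid \mathcal{F}_n] \le W_n - \pi_n c_n \le W_n,$$
so $\{W_n\}$ is a supermartingale, and the term $\pi_n c_n$ is dominated by its (predictable, nondecreasing) compensator.

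The main obstacle is that $W_n$ is not nonnegative and the hypotheses $\sum a_n<\infty$, $\sum b_n<\infty$ hold only almost surely, not in $L^1$; hence $W_n$ is merely bounded below by $-\sum_{k}\pi_k b_k$, an almost surely finite but possibly non-integrable random variable, so the standard $L^1$ supermartingale convergence theorem does not apply directly. I would circumvent this by localization: for $m\in\N$ define the stopping time $\tau_m := \inf\{ n : \sum_{k=1}^{n}\pi_k b_k > m \}$, which is a stopping time since the partial sums are adapted and nondecreasing, and which satisfies $\{\tau_m=\infty\}=\{\sum_k \pi_k b_k \le m\}$ so that $\bigcup_m\{\tau_m=\infty\}$ has probability one. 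The stopped process $W_{n\wedge\tau_m}$ is again a supermartingale and is bounded below by $-m$; hence $W_{n\wedge\tau_m}+m$ is a nonnegative supermartingale and converges almost surely by Doob's theorem.

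It then remains to transfer the conclusions back. On each event $\{\tau_m=\infty\}$ one has $W_{n\wedge\tau_m}=W_n$ for all $n$, so letting $m\to\infty$ shows that $W_n$ converges a.s. Since $\sum_{k=1}^{n-1}\pi_k b_k \to \sum_k \pi_k b_k < \infty$ a.s. and $\pi_{n-1}\to\pi_\infty>0$ a.s., the identity $v_n = \pi_{n-1}^{-1}\bigl(W_n + \sum_{k=1}^{n-1}\pi_k b_k\bigr)$ gives that $\{v_n\}$ converges a.s. to a finite limit, which is the first conclusion. For the second, I would telescope the stopped inequality $\EE[W_{(n+1)\wedge\tau_m}\mid\mathcal{F}_n]\le W_{n\wedge\tau_m} - \pi_n c_n$ on $\{n<\tau_m\}$ and take expectations, using $W_{n\wedge\tau_m}\ge -m$, to get $\EE\bigl[\sum_{n<\tau_m}\pi_n c_n\bigr] \le \EE[W_1]+m<\infty$, whence $\sum_n \pi_n c_n<\infty$ a.s. on $\{\tau_m=\infty\}$; taking the union over $m$ yields $\sum_n \pi_n c_n < \infty$ a.s., and since $\pi_n\to\pi_\infty>0$ this is equivalent to $\sum_n c_n<\infty$ a.s. A minor integrability point—ensuring each $W_{n\wedge\tau_m}$ lies in $L^1$ when $v_1$ or the $b_n$ need not be integrable—can be handled by additionally truncating on $\{v_1\le M\}$ and folding $\sum_k a_k$ into the stopping rule, which leaves the argument above unchanged.
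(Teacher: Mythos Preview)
The paper does not prove this lemma itself; it is quoted with a reference to \cite{Pflug1996}, Appendix~L. Your argument is the standard one and is correct: rescaling by $\pi_n=\prod_{k\le n}(1+a_k)^{-1}$ and subtracting the predictable compensator $\sum_k\pi_kb_k$ turns the recursion into a genuine supermartingale inequality, and localization by stopping times is exactly how one deals with the fact that the hypotheses are only almost sure rather than in $L^1$. The integrability caveat you flag at the end is real, and your proposed fix---additionally truncating on $\{v_1\le M\}$ and enlarging $\tau_m$ to also stop when $\sum_{k\le n}a_k$ first exceeds $m$---is precisely what is done in the textbook proofs; once these bounds hold on $\{n\le\tau_m\}\cap\{v_1\le M\}$, integrability of the stopped process follows by induction from the recursion itself.
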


\begin{lemma}[Quasimartingale convergence theorem]
\label{lemma:Quasimartingale}
Let $\{\mathcal{F}_n\}$ be an increasing sequence of $\sigma$-algebras and $v_n$ be a real-valued random variable adapted to $\mathcal{F}_n$ for all $n$ satisfying the following conditions:
\begin{enumerate}
 \item $\sum_{n=1}^\infty \EE[ |\EE[v_{n+1}| \mathcal{F}_n] - v_n| ] < \infty$ and
 \item $\sup_n \EE[v_n^-] < \infty.$
\end{enumerate}
Then the sequence $\{v_n \}$ converges a.s.~to a $\PP$-integrable random variable $v_\infty$ and
$\EE[v_\infty] \leq \liminf_n \EE[|v_n|]<\infty.$
\end{lemma}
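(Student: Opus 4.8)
The plan is to prove this classical result via the \emph{Doob decomposition}, reducing the quasimartingale to the sum of a martingale and an absolutely convergent predictable process, and then invoking the standard martingale convergence theorem. First I would set $d_k := \EE[v_{k+1}\mid\mathcal{F}_k] - v_k$, so that hypothesis~1 reads $\sum_{k=1}^\infty \EE[|d_k|] < \infty$. Defining the predictable part $a_n := \sum_{k=1}^{n-1} d_k$ (with $a_1 := 0$) and the remainder $m_n := v_n - a_n$, a direct computation using that $a_{n+1}$ is $\mathcal{F}_n$-measurable shows $\EE[m_{n+1}\mid\mathcal{F}_n] = \EE[v_{n+1}\mid\mathcal{F}_n] - a_{n+1} = v_n - a_n = m_n$, so that $\{m_n\}$ is a martingale. (A standing point to keep in mind is the integrability $v_n \in L^1$, implicit in hypothesis~1 so that these conditional expectations and the stated bound make sense.)

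Next I would dispose of the predictable part. Since $\EE\big[\sum_{k=1}^\infty |d_k|\big] = \sum_{k=1}^\infty \EE[|d_k|] < \infty$ by monotone convergence, the random variable $A := \sum_{k=1}^\infty |d_k|$ is $\PP$-integrable and finite a.s.; hence $a_n$ converges a.s.\ (and in $L^1$) to an integrable $a_\infty$, with $\sup_n |a_n| \leq A$. This step uses only hypothesis~1.

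The heart of the argument, and the step I expect to be the main obstacle, is showing that the martingale $\{m_n\}$ converges; this is precisely where \emph{both} hypotheses must combine. For it I would control the negative part: using the elementary estimate $(x-y)^- \leq x^- + y^+$ with $x = v_n$, $y = a_n$, I obtain $m_n^- \leq v_n^- + a_n^+ \leq v_n^- + A$, so $\EE[m_n^-] \leq \sup_n \EE[v_n^-] + \EE[A] < \infty$ by hypothesis~2. Since $\{m_n\}$ is a martingale, $\EE[m_n] = \EE[m_1]$ is constant, whence $\EE[|m_n|] = \EE[m_n] + 2\EE[m_n^-]$ is uniformly bounded. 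Doob's martingale convergence theorem then yields $m_n \to m_\infty$ a.s.\ with $m_\infty$ integrable, and combining gives $v_n = m_n + a_n \to m_\infty + a_\infty =: v_\infty$ a.s.\ with $v_\infty \in L^1$.

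It remains to establish the inequality. From $|v_n| \leq |m_n| + |a_n| \leq |m_n| + A$ and the uniform bound on $\EE[|m_n|]$ I obtain $\sup_n \EE[|v_n|] < \infty$, so in particular $\liminf_n \EE[|v_n|] < \infty$. Finally, since $v_n \to v_\infty$ a.s.\ forces $|v_n| \to |v_\infty|$ a.s., Fatou's lemma yields $\EE[v_\infty] \leq \EE[|v_\infty|] \leq \liminf_n \EE[|v_n|]$, which completes the proof. The only genuinely delicate point is the negative-part bound in the third paragraph, since it is there that hypothesis~1 (through $A$) and hypothesis~2 (through $v_n^-$) together deliver the uniform $L^1$-boundedness needed to apply the martingale convergence theorem.
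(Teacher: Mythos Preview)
Your proof is correct. The paper does not actually prove this lemma; it merely cites M\'etivier, \emph{Semimartingales}, Theorem~9.4, as the source. Your argument via the Doob decomposition $v_n = m_n + a_n$ into a martingale plus a predictable part with $\sum_k \EE[|d_k|] < \infty$, followed by the bound $m_n^- \leq v_n^- + A$ to force $\sup_n \EE[|m_n|] < \infty$ and invoke Doob's convergence theorem, is precisely the classical route found in that reference, so your proposal matches the cited proof.
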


We are now ready for our main convergence result. We base our analysis on the contribution by \cite{Bonnabel2011}, who proved convergence of the stochastic gradient method for finite-dimensional manifolds, and by \cite{Geiersbach2020a}, who proved convergence for infinite-dimensional Hilbert spaces. 

\begin{theorem}
\label{thm:convergence}
Let $(\mathcal{U}, G)$  satisfy \cref{assump:manifold}. Suppose that the sequence $\{ u_n\}$ generated by \cref{alg:stochastic_descent-manifold} is $\mathcal{F}_n$-measurable and a.s.~contained in a bounded set $\mathcal{C}$. On an open set $U \subset \mathcal{U}$ containing $\mathcal{C}$, $j:U \rightarrow \R$ is assumed to be $L$-Lipschitz continuously differentiable and bounded below. Suppose that $\nabla J(u,\xi)$ is a stochastic gradient according to \cref{definition:stochastic-gradient} and there exists a nonnegative constant $M$ such that $\EE[\lVert \nabla J(u,\xi)\rVert^2] \leq M$ for all $u$. 
\begin{enumerate}
 \item Then, the sequence $\{j(u_n)\}$ converges a.s. and $\liminf_{n \rightarrow \infty} \lVert \nabla j(u_n) \rVert = 0.$
 \item If additionally, $f(u):= \lVert \nabla j(u) \rVert^2$ is $L_f$-Lipschitz continuously differentiable, then  $\lim_{n \rightarrow \infty} \nabla j(u_n) = 0$ a.s. In particular, (strong) limit points of $\{ u_n\}$ are stationary points of $j$.
\end{enumerate}
\end{theorem}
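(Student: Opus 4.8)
The plan is to convert the descent inequality of \cref{theorem:Lipschitzcondition} into a one-step decrease estimate and then feed it into the two martingale-type results. For the first claim I would apply \cref{theorem:Lipschitzcondition} with $u = u_n$ and $\tilde u = u_{n+1} = \exp_{u_n}(-t_n \nabla J(u_n,\xi_n))$, so that $v = -t_n \nabla J(u_n,\xi_n)$ and
\[
j(u_{n+1}) - j(u_n) \leq -t_n G_{u_n}(\nabla j(u_n), \nabla J(u_n,\xi_n)) + \tfrac{L}{2} t_n^2 \|\nabla J(u_n,\xi_n)\|^2.
\]
Taking $\EE[\,\cdot\mid\mathcal{F}_n]$ and using that $u_n$ is $\mathcal{F}_n$-measurable while $\xi_n$ is independent of $\mathcal{F}_n$, the unbiasedness property of \cref{definition:stochastic-gradient} turns the inner-product term into $\|\nabla j(u_n)\|^2$, and the second-moment bound controls the last term by $\tfrac{LM}{2}t_n^2$. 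Writing $v_n := j(u_n) - \inf_U j \geq 0$, this gives
\[
\EE[v_{n+1}\mid\mathcal{F}_n] \leq v_n - t_n \|\nabla j(u_n)\|^2 + \tfrac{LM}{2}t_n^2,
\]
which is exactly the form required by \cref{lemma:Robbins-Siegmund} with $a_n = 0$, $b_n = \tfrac{LM}{2}t_n^2$ (summable by \eqref{eq:Robbins-Monro-step-sizes}) and $c_n = t_n\|\nabla j(u_n)\|^2$. Hence $\{j(u_n)\}$ converges a.s.\ and $\sum_n t_n \|\nabla j(u_n)\|^2 < \infty$ a.s.; since $\sum_n t_n = \infty$, a standard contradiction argument yields $\liminf_n \|\nabla j(u_n)\| = 0$ a.s.

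For the second claim I would repeat the computation with $f = \|\nabla j\|^2$ in place of $j$, exploiting that $f$ is $L_f$-Lipschitz continuously differentiable. Setting $\delta_n := \EE[f(u_{n+1})\mid\mathcal{F}_n] - f(u_n)$, the descent inequality together with Cauchy--Schwarz and the second-moment bound gives
\[
\delta_n \leq t_n \|\nabla f(u_n)\|\,\|\nabla j(u_n)\| + \tfrac{L_f M}{2} t_n^2.
\]
The key ingredient is the self-bounding estimate for nonnegative smooth functions: applying \cref{theorem:Lipschitzcondition} to $f$ with the choice $v = -L_f^{-1}\nabla f(u)$ and using $f \geq 0$ yields $\|\nabla f(u)\|^2 \leq 2L_f f(u)$, that is, $\|\nabla f(u)\| \leq \sqrt{2L_f}\,\|\nabla j(u)\|$. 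Substituting, $\delta_n \leq \sqrt{2L_f}\, t_n\|\nabla j(u_n)\|^2 + \tfrac{L_f M}{2}t_n^2$, so that $\sum_n \EE[\delta_n^+] < \infty$ provided $\sum_n t_n \EE[\|\nabla j(u_n)\|^2] < \infty$; this last bound follows by taking total expectations in the part-one recursion, summing, and using $v_{n+1}\geq 0$.

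To invoke \cref{lemma:Quasimartingale} with $v_n = f(u_n)$ I would check its two hypotheses. Condition (2) is immediate, since $f \geq 0$ forces $v_n^- = 0$. For condition (1) I would use the identity $\EE[|\delta_n|] = 2\EE[\delta_n^+] - \EE[\delta_n]$: the first term is summable by the previous paragraph, while $-\sum_{n=1}^N \EE[\delta_n]$ telescopes to $\EE[f(u_1)] - \EE[f(u_{N+1})] \leq \EE[f(u_1)] < \infty$ because $f\geq 0$. Thus $\{\|\nabla j(u_n)\|^2\}$ converges a.s.; combined with $\liminf_n\|\nabla j(u_n)\| = 0$ from the first part, the common limit must be $0$, so $\nabla j(u_n) \to 0$ a.s. The claim on limit points then follows from continuity of $u \mapsto \|\nabla j(u)\|$.

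The main obstacle I anticipate is justifying the admissibility of each step in the \emph{pathwise} descent inequalities: \cref{theorem:Lipschitzcondition} requires $\d(u_n,u_{n+1}) = t_n\|\nabla J(u_n,\xi_n)\| \leq i(\mathcal{U})$, and the self-bounding step requires $L_f^{-1}\|\nabla f(u)\| \leq i(\mathcal{U})$, yet only the second moment of $\|\nabla J\|$ is controlled, so a single realization need not respect the injectivity radius. Reconciling this with the a.s.\ containment of $\{u_n\}$ in the bounded set $\mathcal{C}$, and ensuring that the constructions survive in the infinite-dimensional weak-metric setting discussed in \cref{subsection_infinite-dimensional-MF}, is where the argument will need the most care.
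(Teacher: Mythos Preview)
Your Part~1 is essentially identical to the paper's proof: the same descent inequality, the same conditioning argument, and the same application of \cref{lemma:Robbins-Siegmund} with $a_n=0$, $b_n=\tfrac{LM}{2}t_n^2$, $c_n=t_n\|\nabla j(u_n)\|^2$.

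Part~2 is correct but takes a genuinely different route. The paper computes $\nabla f(u)=2\,\mathrm{Hess}\,j(u)[\nabla j(u)]$ explicitly, so that after conditioning the inner-product term becomes $-2t_n G(\nabla j(u_n),\mathrm{Hess}\,j(u_n)[\nabla j(u_n)])$, and then invokes $\|\mathrm{Hess}\,j\|\leq L$ (from the $L$-Lipschitz assumption on $\nabla j$) to bound this by $2Lt_n\|\nabla j(u_n)\|^2$. You instead bound the inner product by Cauchy--Schwarz and control $\|\nabla f(u)\|$ via the self-bounding inequality $\|\nabla f(u)\|^2\leq 2L_f f(u)$, which follows from nonnegativity of $f$ and \cref{theorem:Lipschitzcondition}. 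Both paths land on an estimate of the form $\delta_n\leq C\,t_n\|\nabla j(u_n)\|^2+C't_n^2$ and then feed \cref{lemma:Quasimartingale}. Your approach is more elementary in that it never introduces the Hessian, but it buys this at the price of an extra injectivity-radius hypothesis (you need $L_f^{-1}\|\nabla f(u)\|\leq i(\mathcal{U})$ for the self-bounding step), which you correctly flag. Conversely, the paper's route needs the Hessian identity and its operator-norm bound, which implicitly asks for a bit more structure on the manifold. Your verification of condition~(1) in \cref{lemma:Quasimartingale} via $\EE[|\delta_n|]=2\EE[\delta_n^+]-\EE[\delta_n]$ and telescoping is in fact more careful than what the paper writes; the paper only displays the upper bound on $\delta_n$ and asserts summability.

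On the obstacle you raise about $\d(u_n,u_{n+1})\leq i(\mathcal{U})$: the paper does address this, arguing (somewhat informally) that $\EE[\|\nabla J(u,\xi)\|]\leq\sqrt{M}$ implies the stochastic gradient is a.s.\ finite, and since $t_n\to 0$ there is an index $N$ after which $t_n\|\nabla J(u_n,\xi_n)\|\leq i(\mathcal{U})$. You are right that this step deserves care in the infinite-dimensional setting, but you should be aware that the paper treats it as resolved rather than left open.
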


\begin{remark}
We relax several assumptions from \cite{Bonnabel2011}; in particular, we do not require that the objective function is three times continuously differentiable, requiring twice continuous differentiability and a Lipschitz condition on the second order derivative. Most importantly, we do not require the stochastic gradient to be uniformly bounded, which precludes many choices of random variables, but impose instead a bound on the variance. Finally, as our application involves an infinite-dimensional manifold, we relax the assumption of compactness. We note that $u_n$ is automatically $\mathcal{F}_n$-measurable if $\{ \mathcal{F}_n\}$ is the natural filtration induced by the sequence $\{ \xi_n\}$ from \cref{alg:stochastic_descent-manifold}. The requirement that $\{ u_n\}$ stays in a bounded set $\mathcal{C}$ is not automatic: this can be enforced by the use of regularizers or follows for certain choices of $j$; see \cite{Bottou1998, Davis2018}. 	 As a note, the properties given in \cref{assump:manifold} only need to apply to the subset $U$ for which $j$ is $L$-Lipschitz continuously differentiable. 
\end{remark}

\begin{proof}[Proof of \cref{thm:convergence}]
Without loss of generality assume $j \geq 0$ (otherwise with $\bar{j}:=\inf_{u \in \mathcal{C}} j(u)$ observe $\tilde{j}:=j - \bar{j}$ and make the same arguments for $\tilde{j}$). First, we argue that there is an index $N$ such that $\d(u_{n+1},u_n) \leq i(\mathcal{U})$ for all $n\geq N$. Notice that by Jensen's inequality and the assumption on the stochastic gradient, it holds for all $u \in \mathcal{C}$ that
\begin{equation}
\label{eq:Jensens-proof-convergence}
\lVert \EE [\nabla J(u,\xi)] \rVert \leq \EE[\lVert \nabla J(u,\xi)\rVert] \leq \sqrt{M}.
\end{equation}
It follows that the stochastic gradient is bounded in expectation and hence bounded with probability one. Therefore there exists an index $N$ such that $\d(u_{n+1},u_n) \leq i(\mathcal{U})$ for all $n\geq N$.
Let $v_n:=\nabla J(u_n,\xi_n).$ Using the update given by \cref{alg:stochastic_descent-manifold}, and the fact that~\eqref{eq:Lipschitz-condition} is satisfied, \cref{theorem:Lipschitzcondition} implies that 
\begin{equation}
 \label{eq:Lipschitz-inequality}
\begin{aligned}
j(u_{n+1})-j(u_n) &\leq -t_n G(\nabla j(u_n), v_n) + \tfrac{1}{2} Lt_n^2  \lVert v_n\rVert^2.
\end{aligned}
\end{equation} 
Taking conditional expectation on both sides of \cref{eq:Lipschitz-inequality}, we get by monotonicity of the conditional expectation and measurability of $u_n$ with respect to $\mathcal{F}_n$ that
\begin{equation}
 \label{eq:Lipschitz-inequality-conditional-expectation}
\begin{aligned}
\EE[j(u_{n+1})|\mathcal{F}_n]-j(u_n) &\leq -t_n \EE[G(\nabla j(u_n), v_n)] | \mathcal{F}_n] + \tfrac{1}{2} L t_n^2 \, \EE[\lVert v_n\rVert^2 | \mathcal{F}_n].
\end{aligned}
\end{equation}
Since $\xi_n$ is chosen independently of $\xi_1, \dots, \xi_{n-1}$ by \cref{alg:stochastic_descent-manifold}, it follows that $\EE[v_n|\mathcal{F}_n] = \EE_{\xi}[\nabla J(u_n,\xi)] = \nabla j(u_n)$ for all $n$. 
The expression \cref{eq:Lipschitz-inequality-conditional-expectation} simplifies to
\begin{equation}
 \label{eq:Lipschitz-inequality-expectation}
 \EE[j(u_{n+1})|\mathcal{F}_n] \leq j(u_n)   -t_n \lVert \nabla j(u_n) \rVert^2 + \tfrac{1}{2}L M t_n^2.
\end{equation}
Now, with $a_n = 0$, $b_n = \tfrac{1}{2} LM t_n^2$, and $c_n = t_n\lVert \nabla j(u_n) \rVert^2$, we get by \cref{lemma:Robbins-Siegmund} that the sequence $\{ j(u_n)\}$ is a.s.~convergent and additionally that
$\sum_{n=1}^\infty t_n \lVert \nabla j(u_n) \rVert^2 < \infty$
with probability one. In particular, it follows that $\liminf_{n \rightarrow \infty}  \lVert \nabla j(u_n)\rVert^2 = 0$ a.s. This proves the first statement.

For the second part, we first show that 
\begin{equation}
\label{eq:expectation-steps-finite-sum}
\sum_{n=1}^\infty t_n \EE[\lVert \nabla j(u_n) \rVert^2] < \infty.
\end{equation}
Taking expectation on both sides of \eqref{eq:Lipschitz-inequality-expectation}, summing, and rearranging, we get 
\begin{equation}
 \label{eq:finite-expectation-sum}
 \begin{aligned}
 \sum_{n=1}^{\tilde{N}} t_n \EE[\lVert \nabla j(u_n) \rVert^2] &\leq  \sum_{n=1}^{\tilde{N}} \left( \EE[j(u_n)] - \EE[j(u_{n+1})] +  \frac{L M t_n^2}{2}\right)\\
 &\leq \EE[j(u_1)]- \bar{j} +   \sum_{n=1}^{\tilde{N}}  \frac{L M t_n^2}{2}.\\
 \end{aligned}
\end{equation}
Notice that the right-hand side of \eqref{eq:finite-expectation-sum} is bounded as $\tilde{N}\rightarrow \infty$ due to the step-size condition \eqref{eq:Robbins-Monro-step-sizes} and the left-hand side is monotonicity increasing in $\tilde{N}$. Therefore, by the monotone convergence theorem, we obtain \eqref{eq:expectation-steps-finite-sum}. Now, we note that, by similar arguments to those used in \cite[Appendix B]{Bonnabel2011},
\begin{align*}
G_u(v,\nabla f(u)) = G_u(\nabla j(u),2\text{Hess}j(u)[v])
\end{align*}
and since the Hessian operator is self-adjoint~\cite[Lemma 11.1]{Lee2018}, we get that $\nabla f(u) = 2\text{Hess}j(u)[\nabla j(u)].$ Since $\nabla f$ is $L_f$-Lipschitz continuous, it follows by \cref{theorem:Lipschitzcondition} that
\begin{equation}
\label{eq:Lipschitz-inequality-second}
\begin{aligned}
f(u_{n+1}) - f(u_n) &\leq -2 t_n G(v_n, \text{Hess}j(u_n) [\nabla j(u_n)]) + \tfrac{1}{2}L_f t_n^2  \lVert v_n \rVert^2.
\end{aligned}
\end{equation}
Taking conditional expectation on both sides of \eqref{eq:Lipschitz-inequality-second}, we get
\begin{equation}
\label{eq:final-inequality-proof}
\begin{aligned}
\EE[f(u_{n+1})|\mathcal{F}_n] - f(u_n) &\leq -2 t_n G(\nabla j(u_n),\text{Hess}j(u_n)[ \nabla j(u_n)]) + \tfrac{1}{2}  L_f M t_n^2\\
&\leq 2 t_n L \lVert \nabla j(u_n) \rVert^2 + \tfrac{1}{2} t_n^2 L_f M,
\end{aligned}
\end{equation}
where in the last step, we used $\lVert \text{Hess} j(u_n) \rVert \leq L$ by $L$-Lipschitz continuity of $j$. Taking the expectation on both sides of \eqref{eq:final-inequality-proof}, we have
$$\EE[\EE[f(u_{n+1})|\mathcal{F}_n] - f(u_n)] \leq 2 t_n \EE[\lVert \nabla j(u_n) \rVert^2] L    +\tfrac{1}{2} t_n^2 L_f M.$$

Now, we can verify the conditions of \cref{lemma:Quasimartingale} with $v_n = f(u_n)$. Obviously, we have $\sup_n \EE[f(u_n)^-] < \infty.$ The terms on the right-hand side of \cref{eq:final-inequality-proof} are summable by the first part of the proof and \eqref{eq:expectation-steps-finite-sum}. Therefore, by \cref{lemma:Quasimartingale} we get that $f(u_n) = \lVert \nabla j(u_n) \rVert^2$ converges almost surely. Since we already established $\liminf_{n \rightarrow \infty} \lVert \nabla j(u_n)\rVert^2= 0$, we obtain $\lim_{n \rightarrow \infty}\lVert \nabla j(u_n)\rVert^2=0.$ This implies that with probability one, $\lim_{n \rightarrow \infty} \nabla j(u_n) = 0.$ 
\end{proof}

The following proposition can be proven using the same arguments as in \cite{Bonnabel2011}.

\begin{proposition}
 \label{proposition:convergence-retractions}
With the same assumptions as in \cref{thm:convergence}, let $\mathcal{R}_u$ be a twice differentiable retraction and replace line 5 of \cref{alg:stochastic_descent-manifold} by the update 
\begin{equation}
\label{eq:retraction-step}
u_{n+1}= \mathcal{R}_{u_n}(-t_n \nabla J(u_n,\xi_n)). 
\end{equation}
Then, with probability one, $\{j(u_n)\}$ converges~and $\lim_{n\rightarrow \infty} \nabla j(u_n) = 0$.
\end{proposition}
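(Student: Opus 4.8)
The plan is to observe that the exponential map enters the proof of \cref{thm:convergence} at exactly one point: the descent inequality \eqref{eq:Lipschitz-inequality}, which is the instantiation of \cref{theorem:Lipschitzcondition} at $\tilde u = \exp_{u_n}(-t_n v_n)$ with $v_n = \nabla J(u_n,\xi_n)$. Every subsequent step -- taking conditional expectations, invoking the unbiasedness $\EE[v_n\mid\mathcal{F}_n] = \nabla j(u_n)$ together with the second-moment bound $M$, and then applying \cref{lemma:Robbins-Siegmund} and \cref{lemma:Quasimartingale} -- uses only this quadratic upper bound and never the geodesic structure itself. Hence the whole task reduces to establishing a retraction analogue of \cref{theorem:Lipschitzcondition}: there is a constant $\tilde L>0$ such that, for all $u\in\mathcal{C}$ and all sufficiently small $w\in T_u\mathcal{U}$,
\begin{equation*}
j(\mathcal{R}_u(w)) - j(u) \leq G_u(\nabla j(u),w) + \frac{\tilde L}{2}\norm{w}^2.
\end{equation*}

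To prove this inequality I would run the same argument as in the proof of \cref{theorem:Lipschitzcondition}, but along the retraction curve instead of the geodesic. Define $\phi\colon[0,1]\to\R$ by $\phi(t):= j(\mathcal{R}_u(tw))$ and write $c(t):=\mathcal{R}_u(tw)$. The local rigidity condition $\d\mathcal{R}_u(0_u)=\text{id}_{T_u\mathcal{U}}$ gives $c(0)=u$ and $c'(0)=w$, so that $\phi(0)=j(u)$ and $\phi'(0)=G_u(\nabla j(u),w)$, matching the geodesic case exactly. The difference appears only at second order: because $c$ is no longer a geodesic, its acceleration $c''(t)$ need not vanish, and twice-differentiability of $\mathcal{R}_u$ is precisely what is needed to control it. Using the identity \eqref{fund_the_calculus}, I would split $\phi'(t)-\phi'(0)$ into a term measuring the change of $\nabla j$ along $c$ (bounded via $L$-Lipschitz continuity of the gradient together with $\d(u,c(t))\leq \tilde c\,\norm{w}$) and a term measuring the deviation of $c'(t)$ from the parallel transport of $w$, whose covariant derivative is exactly $c''(t)$ and is therefore bounded by $\sup_t\norm{c''(t)}$. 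Both contributions are $O(\norm{w}^2)$, which yields the claimed quadratic bound with a constant $\tilde L$ absorbing $L$, the bound on $\norm{\nabla j}$ over $\mathcal{C}$, and the second-order bound on $\mathcal{R}$.

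With this descent inequality in hand, the remainder is verbatim: substituting $w=-t_n v_n$ reproduces \eqref{eq:Lipschitz-inequality} with $L$ replaced by $\tilde L$, after which the conditional-expectation step, \cref{lemma:Robbins-Siegmund}, the second Hessian-based inequality of the proof, and \cref{lemma:Quasimartingale} all apply unchanged, delivering a.s.~convergence of $\{j(u_n)\}$ and $\lim_{n\to\infty}\nabla j(u_n)=0$. The hard part will be obtaining the acceleration bound $\norm{c''(t)}\leq C\norm{w}^2$ \emph{uniformly} in the base point $u\in\mathcal{C}$. In the finite-dimensional setting of \cite{Bonnabel2011} this follows from compactness of the relevant set; here $\mathcal{C}$ is only assumed bounded, and boundedness does not imply compactness in infinite dimensions, so one must either extract the uniform second-order bound from additional structure of the specific retraction or strengthen the hypotheses on $\mathcal{R}$ over $\mathcal{C}$. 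Establishing this uniformity -- rather than any step of the stochastic argument -- is the delicate point.
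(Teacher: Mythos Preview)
Your proposal is correct and matches the paper's approach: the paper does not give a proof at all but simply states that the proposition ``can be proven using the same arguments as in \cite{Bonnabel2011},'' and Bonnabel's argument is precisely the one you outline---replace the geodesic descent inequality by a retraction-based quadratic upper bound obtained from a second-order Taylor expansion of $j\circ\mathcal{R}_u$, then rerun the stochastic machinery unchanged. You have in fact gone further than the paper by flagging the genuine infinite-dimensional subtlety: Bonnabel's uniform bound on the retraction's acceleration relies on compactness, while here $\mathcal{C}$ is only bounded, so the uniformity of $\tilde L$ over $\mathcal{C}$ is not automatic and must be assumed or derived from additional structure---a point the paper does not address.
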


\subsection{The shape space $B_e$}
\label{subsection_ShapeSpaces}
In this paper, we focus on the manifold of one-dimensional smooth shapes, which we introduce next. Of course, to apply the results from \cref{subsection:SGAlgorithmManifolds}, one can also choose other shape spaces with a Riemannian structure.
First, we introduce notation. Let $D\subset \mathbb{R}^2$ be a bounded Lipschitz domain with boundary $\partial D$. The domain $D$ is assumed to be partitioned into two subdomains $\Din$ and $\Dout$ in such a way that $ \Din \subset D$ and $\Dout\subset D$ and $\Din \sqcup u \sqcup \Dout = D$, where $\sqcup$ denotes the disjoint union. The interior boundary $u\coloneqq\partial \Din $ is assumed to be smooth and the outer boundary is denoted by $\partial D$.  We use standard notation for Sobolev spaces $H^r(D)$ with corresponding norms $\lVert \cdot \rVert_{H^r(D)}.$ The notation $H_0^r(D)$ indicates the subspace of $H^r(D)$ containing functions equal to zero on the boundary. Additionally, $H_0^r(D,\R^2)$ denotes a vector-valued Sobolev space and its seminorm and norm are denoted by $\lvert \cdot\rvert_{H^r(D,\R^2)}$ and $\lVert \cdot \rVert_{H^r(D, \R^2)}$, respectively. The space of $k$-times continuously differentiable functions $f:D \rightarrow \R^2$ a.e.~vanishing on the boundary is denoted by $C_0^k(D,\R^2)$. The inner product between two vectors $v,w \in \R^2$ is denoted by $v\cdot w = v_1 w_1 + v_2 w_2$. The Euclidean norm is denoted by $\|\cdot\|_2$ and $\id$ denotes the ($2\times 2$) identity matrix.

We concentrate on one-dimensional shapes in this paper. The \emph{space of one-dimen\-sional smooth shapes} (cf.~\cite{MichorMumford1}) is characterized by the set
\begin{equation}
\label{B_e}
B_e:=B_e(S^1,\mathbb{R}^2):=\text{Emb}(S^1,\mathbb{R}^2) / \text{Diff}(S^1),
\end{equation}
i.e., the orbit space of $\mathrm{Emb}(S^1,\mathbb{R}^2)$ under the action by composition from the right by the Lie group $\mathrm{Diff}(S^1)$. 
Here, $\mathrm{Emb}(S^1,\R^2)$ denotes the set of all embeddings from the unit circle $S^1$ into $\R^2$, which contains all simple closed smooth curves in $\R^2$. Note that we can think of smooth shapes as the images of simple closed smooth curves in the plane of the unit circle because the boundary of a shape already characterizes the shape. The set $\mathrm{Diff}(S^1)$ is the set of all diffeomorphisms from $S^1$ into itself, which characterize all smooth reparametrizations. These equivalence classes are considered because we are only interested in the shape itself and images are not changed by reparametrizations. 
In \cite{KrieglMichor}, it is proven that the shape space $B_e$ is a smooth manifold; together with appropriate inner products it is even a Riemannian manifold.
In order to define a suitable metric, we need the tangent spaces of $B_e.$
The tangent space $T_u B_e$ is isomorphic to the set of all smooth normal vector fields along $u\in B_e$, i.e.,
\begin{equation}
\label{isomorphismTcBe}
T_u B_e\cong\left\{h\colon h=\alpha \n,\, \alpha\in \mathcal{C}^\infty(u)\right\} \cong \left\{\alpha: \alpha \in C^\infty(u) \right\},
\end{equation}
where the symbol $\n$ denotes the exterior unit normal field to the shape $u$. Following the ideas presented in~\cite{SchulzSiebenbornWelker2015:2}, we choose the Steklov--Poincar\'e metric defined below.
\begin{definition}
Let $\textup{tr}\colon  H^1_0(D,\mathbb{R}^2) \to H^{1/2}(u,\mathbb{R}^2)$ denote the trace operator on Sobolev spaces for vector-valued functions and $a_u\colon H_0^1(D,\R^2) \times H_0^1(D,\R^2) \rightarrow \R$ be a symmetric and coercive bilinear form. If $V\in H^1_0(D,\mathbb{R}^2)$ solves the Neumann problem
\begin{equation}\label{weak-elasticity-N2}
a_u(V,W)=\int_{u} v (\textup{tr}(W))\cdot \n\ \ds\quad \forall\hspace{.3mm}  W\in H^1_0(D,\mathbb{R}^2),
\end{equation}
and $S^{pr}\colon  H^{-1/2}(u) \to H^{1/2}(u),\
v \mapsto (\textup{tr}(V))\cdot \n$ denotes the projected Poincar\'e--Steklov operator, then the Steklov--Poincar\'e metric is defined by the mapping
$$G^S\colon H^{1/2}(u)\times H^{1/2}(u)  \to \mathbb{R},
(v,w) \mapsto 
\int_{u} v (S^{pr})^{-1}w \ \ds.$$ 
\end{definition}
To define a metric on $B_e$, we restrict the Steklov--Poincar\'e metric to the mapping $G^S\colon  T_u B_e \times T_u B_e \rightarrow \R$. In the next section, we will relate the manifold $B_e$ to the shape derivative to obtain shape gradients to be used in \cref{alg:stochastic_descent-manifold}. It is worth mentioning that some of the following considerations are only of a formal nature.
In view of developing a numerical procedure, we are working with vector fields which are less smooth. To be more precise, if $v \in B_e$, then $v$ should be smooth. However, the Steklov--Poincar\'e metric definition deals only with $H^{1/2}$-functions.
Of course, it would be possible to consider other types of metrics like the Soblolev-type metrics or almost local metrics. However, in order to obtain an efficient shape optimization algorithm, the Steklov-Poincar\'{e} metric has some numerical advantages over these other metrics as shown in 
\cite{SchulzSiebenborn2016,Siebenborn2017,Welker2016}.

\subsection{Shape calculus combined with stochastic modeling}
\label{subsection_ShapeCalcStochModeling}
In this section, we generalize the shape derivative for expectation functionals and give conditions under which the shape derivative and expectation can be exchanged. Additionally, we make the connection between shape calculus and the shape space presented in \cref{subsection_ShapeSpaces}.

There are different approaches for the representation of perturbed shapes. 
The perturbation of identity is defined for a given vector field $V$ and $T>0$ as a family of mappings $\{F_t^V\}_{t\in[0,T]}$ such that $F_t^V\colon \bar{D}\to\mathbb{R}^2$, $F_t^V(x):= x+tV(x)$ for all $x\in \bar{D}.$ 
\footnote{It is possible to guarantee invertibility of the perturbation of identity. More precisely, if $V$ is Lipschitz continuous in $\bar{D}$ and for a $c>0$, $\lVert \nabla V \rVert_{C(D,\R^{2,2})} < c$, then there exists a $T>0$ such that for all $t\in [0,T]$, $F_t^V$ is invertible, see, e.g.,~\cite[footnote~9, p.~25]{Sturm2014}.} For a given subset $A$ of $D$, we define
\begin{equation}
\label{eq:perturbation_of_identity}
F_t^V(A):=\{ F_t^V(x) \colon x \in A\}.
\end{equation}
Alternatively, the perturbations could be described as the flow $F_t(x)\coloneqq \zeta(t,x)$ determined by the initial value problem
\begin{equation*}
\frac{\partial\zeta}{\partial t}(t,x)=V(\zeta(t,x)), \quad \zeta(0,x)=x
\end{equation*}
i.e., by the velocity method. In this work, we focus on the perturbation of identity. 
Now we can introduce the definition of the shape derivative for a fixed realization.

\begin{definition}[Shape derivative for a fixed realization]
\label{definition_ShapeDer}
Let $D \subset \R^2$ be open and the realization $\xi \in \Xi$ be fixed. Moreover, let $k \in \bar{\N}$ and $u \subset D$ be (Lebesgue) measurable. The Eulerian derivative of a shape functional $J(\cdot,\xi)$ at $u$ in the direction $V\in C_0^k(D,\R^2)$ is defined (if it exists) by 
	\begin{equation}\label{eq:Eulerianderivative}
	d J(u,\xi)[V]:= \lim_{t \rightarrow 0^+} \frac{J(F_t^V(u),\xi) - J(u, \xi)}{t}.
	\end{equation}
	If for all directions $V\in C_0^\infty(D,\R^2)$, the Eulerian derivative \eqref{eq:Eulerianderivative} exists and the mapping $V \mapsto d J(u,\xi)[V]\colon C_0^\infty(D,\R^2) \to \mathbb{R}$
	is linear and continuous, then $J(\cdot,\xi)$ is called shape differentiable.
\end{definition}

We will show under what conditions $j(\cdot) = \EE[J(\cdot,\xi)]$ is shape differentiable in $u$. 
\begin{lemma}
\label{lemma:differentiability-expectation}
Suppose that $J(\cdot,\xi)$ is shape differentiable in $u$ for almost every $\xi \in \Xi$. Assume there exists a $\tau > 0$ and a $\PP$-integrable real function $C:\Xi \rightarrow \R$ such that for all $t \in [0,\tau]$, all $V \in C_0^\infty(D,\R^2)$, and almost every $\xi$,
\begin{equation}
\label{eq:difference-quotient-random-shape-derivative}
R_t^V(\xi):=\frac{J(F_t^V(u),\xi) - J(u,\xi)}{t} \leq C(\xi).
\end{equation}
Then $j$ is shape differentiable in $u$ and 
\begin{equation}
\label{eq:differentiability-expectation-exchange}
dj(u)[V] = \EE[d J(u,\xi)[V]] \quad \forall V\in C_0^\infty(D,\R^2).
\end{equation}
\end{lemma}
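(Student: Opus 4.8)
The plan is to reduce the claim to an interchange of a limit and an expectation, and then to justify that interchange with a Lebesgue-type convergence theorem. First I would observe that, by linearity of the integral, the difference quotient of $j$ in a fixed direction $V$ is exactly the expectation of the fixed-realization difference quotient:
\[
\frac{j(F_t^V(u)) - j(u)}{t} = \EE\!\left[ R_t^V(\xi)\right], \qquad t \in (0,\tau].
\]
Hence the Eulerian derivative $dj(u)[V]$, if it exists, equals $\lim_{t\to 0^+}\EE[R_t^V(\xi)]$, and proving \eqref{eq:differentiability-expectation-exchange} amounts to justifying
\[
\lim_{t\to 0^+}\EE\!\left[R_t^V(\xi)\right] = \EE\!\left[\lim_{t\to 0^+} R_t^V(\xi)\right] = \EE[dJ(u,\xi)[V]].
\]

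The core is then the interchange of limit and expectation. For almost every fixed $\xi$, shape differentiability of $J(\cdot,\xi)$ supplies the pointwise limit $R_t^V(\xi)\to dJ(u,\xi)[V]$ as $t\to 0^+$, while hypothesis \eqref{eq:difference-quotient-random-shape-derivative} supplies the upper envelope $R_t^V(\xi)\le C(\xi)$ with $C$ integrable. To obtain integrability of the limit, I would apply \eqref{eq:difference-quotient-random-shape-derivative} also to the direction $-V$: since $V\mapsto dJ(u,\xi)[V]$ is linear for fixed $\xi$, passing to the limit $t\to 0^+$ in $R_t^{-V}(\xi)\le C(\xi)$ gives $-dJ(u,\xi)[V]\le C(\xi)$, so that $|dJ(u,\xi)[V]|\le C(\xi)$ and $\xi\mapsto dJ(u,\xi)[V]$ is $\PP$-integrable. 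From the upper envelope, Fatou's lemma applied to the nonnegative family $C(\xi)-R_t^V(\xi)$ (reverse Fatou) yields $\limsup_{t\to 0^+}\EE[R_t^V(\xi)]\le \EE[dJ(u,\xi)[V]]$. The matching lower bound on the $\liminf$ is the step I expect to be the main obstacle: because the domination in \eqref{eq:difference-quotient-random-shape-derivative} is only one-sided, the dominated convergence theorem cannot be invoked verbatim, and one must exploit the a.e.\ existence of the pointwise limit together with the $-V$ form of the bound to close the gap and conclude that the full limit exists and equals $\EE[dJ(u,\xi)[V]]$.

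Finally, with the representation $dj(u)[V]=\EE[dJ(u,\xi)[V]]$ in hand for every $V\in C_0^\infty(D,\R^2)$, I would verify the two requirements of \cref{definition_ShapeDer}. Linearity of $V\mapsto dj(u)[V]$ is inherited directly from linearity of $V\mapsto dJ(u,\xi)[V]$ under the expectation. Continuity I would obtain by taking a sequence $V_n\to V$ in $C_0^\infty(D,\R^2)$, observing $dJ(u,\xi)[V_n]\to dJ(u,\xi)[V]$ almost everywhere by continuity of the fixed-realization shape derivative, and passing to the limit under the integral using the domination furnished by \eqref{eq:difference-quotient-random-shape-derivative}. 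This establishes that $j$ is shape differentiable in $u$ with the claimed exchange formula.
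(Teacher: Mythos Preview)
Your overall strategy coincides with the paper's: rewrite the difference quotient of $j$ as $\EE[R_t^V(\xi)]$, interchange limit and expectation, and then inherit linearity and continuity of $V\mapsto dj(u)[V]$ from the fixed-realization derivative. The paper carries out the interchange in one line by invoking Lebesgue's dominated convergence theorem, asserting that $|R_t^V(\xi)|$ is controlled by the integrable $C(\xi)$ and then concluding \eqref{eq:differentiability-expectation-exchange} directly.

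Where you diverge is in your (justified) caution about the one-sidedness of \eqref{eq:difference-quotient-random-shape-derivative}. Your reverse-Fatou step is fine and yields $\limsup_{t\to 0^+}\EE[R_t^V]\le \EE[dJ(u,\xi)[V]]$. The difficulty you flag with the $\liminf$ is real, and the $-V$ device you propose does \emph{not} resolve it: from $R_t^{-V}(\xi)\le C(\xi)$ you may pass to the limit and obtain $|dJ(u,\xi)[V]|\le C(\xi)$ (so the limit is integrable), but $R_t^{-V}(\xi)$ is the difference quotient along the \emph{different} perturbation $F_t^{-V}$ and is not equal to $-R_t^V(\xi)$ for $t>0$. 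Hence it supplies no lower envelope for $R_t^V(\xi)$ at positive $t$, and the $\liminf$ half of the interchange is still missing. In short, the paper simply treats the domination as if it were two-sided and applies dominated convergence; under the hypothesis as literally stated, your more careful route exposes a genuine gap that neither argument fully closes. Your handling of linearity and continuity at the end is in line with the paper's (and the uniform-in-$V$ bound $|dJ(u,\xi)[V]|\le C(\xi)$ you derived does let dominated convergence go through for the continuity step).
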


\begin{proof}
Since $J(u,\xi)$ is shape differentiable, the limit $\lim_{t \rightarrow 0^+} R_t^V(\xi) = dJ(u,\xi)[V]$ exists for all $V$. We have that $|R_t^V(\xi)|\leq C(\xi)t$ and $C(\xi)$ is integrable, i.e.~$\EE[C(\xi)]< \infty$. By Lebesgue's dominated convergence theorem, we thus get
\begin{align*}
\quad \lim_{t \rightarrow 0^+} \int_\Omega R_t^V(\xi(\omega)) \, \d \PP(\omega) &= \int_{\Omega} \lim_{t \rightarrow 0^+} R_t^V(\xi(\omega)) \,\d \PP(\omega)\\
 \Leftrightarrow \quad dj(u)[V]= \lim_{t\rightarrow 0^+} \frac{j(F_t^V(u))-j(u)}{t} &= \int_\Omega dJ(u,\xi(\omega))[V]\, \d \PP(\omega) = \EE[dJ(u,\xi)[V]].
\end{align*}
Therefore \eqref{eq:differentiability-expectation-exchange} holds. Linearity and continuity of $V \mapsto dj(u)[V]$ follows by linearity and continuity of $V \mapsto dJ(u,\xi)[V]$ for almost every $\xi$.
\end{proof}

\begin{remark}
The arguments used in the proof of \cref{lemma:differentiability-expectation} can be applied to vector fields of lower regularity to obtain conditions for exchanging the Eulerian derivative and expectation.
\end{remark}

Now, we will make the connection between shape calculus and shape spaces. 
From now on, we will denote the shape space $\mathcal{U}:=B_e$ with corresponding metric $G:=G^S$, i.e.~$(\mathcal{U},G) = (B_e,G^S)$. We define the set $\UU_D:=\{ u\in \UU, u \subset D\}$ of shapes $u$ belonging to the manifold $\UU$ that are also contained in the hold-all domain $D$. We will allow $u$ to vary, so one should keep in mind that $D$ depends on $u$, i.e., $D=D(u)$. If $u$ is changing, then the subdomain $\Din\subset D$ changes in a natural manner.

As utilized in~\cite{SchulzSiebenborn2016,Schulz2015a,SchulzSiebenbornWelker2015:2,Welker2016}, the Steklov--Poincar\'{e} metric allows the computation of the Riemannian shape gradient as a representative of the shape derivative in volume form. Besides saving analytical effort during the calculation process of the shape derivative, this technique is computationally more efficient than using an approach which needs the surface shape derivative form (cf., e.g., \cite{Siebenborn2017,Welker2016}). The shape derivative defined in~\cref{definition_ShapeDer} can be given in the boundary (strong) and the volume (weak) representation. 
The Hadamard structure theorem \cite[Theorem~2.7]{Sokolowski1991} states the existence of a scalar distribution $r$ on the shape $u$. 
We assume $r\in L^2(u)$. Thus, the shape derivative in its strong form can be expressed by $dJ(u,\xi)[W]=\int_{u} r W\cdot \n \, \ds$.  In this setting, a representation $\tilde{v}\in T_{u} \mathcal{U}$ of the Riemannian shape gradient in terms of the inner product $G$ on the manifold is the solution to 
$$G(\tilde{v},w)=\left(r,w\right)_{L^2(u)} \, \quad\forall w\in T_{u} \mathcal{U}.$$
From this, we get that the vector $V\in H_0^1(D,\R^2)$ can be viewed as an extension of a Riemannian shape gradient to the hold-all domain $D$ because of the identities 
\begin{equation}
\label{Steklov_identity}
G(v,w)= dJ(u,\xi)[W]=a(V,W)\quad \forall W\in H_0^1(D,\R^2),
\end{equation}
where $v=\text{tr}(V) \cdot \n,w=\text{tr}(W) \cdot \n$. In general, $v,w$ are not necessarily elements of $T_u {\mathcal{U}}$ because it is not ensured that $V,W\in H_0^1(D,\R^2)$ are $\mathcal{C}^\infty$.
As mentioned above, these elements need to be considered only formally.

In \cref{Steklov_identity}, one option for $a(\cdot, \cdot)$ is the bilinear form associated with linear elasticity, i.e.,
\begin{equation}
\label{eq:linear-elasticity-bilinear-form}
a^{\text{elas}}(V, W):=\int_D (\lambda \text{tr}  ( \epsilon(V)  )\text{id} + 2  \mu \epsilon(V)) : \epsilon(W)\, \dx,
\end{equation}
where $\epsilon(W):= \frac{1}{2} \, (\nabla W + \nabla W^T)$, $A : B$ denotes the Frobenius inner product for two matrices $A, B$ and $\lambda,\mu \in \R$ denote the Lam\'{e} parameters.

\begin{remark}
\label{remark:convergence-manifold-implies-convergence-holdall}
It is straightforward to show that $a^{\text{elas}}(\cdot,\cdot)$ is a bounded and coercive bilinear form.
By \cref{Steklov_identity}, and by coercivity, there exists a $k>0$ and by boundedness, there exists a $K>0$ such that
$$  k \lVert V\rVert_{H^1(D,\R^2)}^2 \leq a^{\text{elas}}(V,V) = G(v,v) = \lVert v \rVert^2 \leq K \lVert V\rVert_{H^1(D,\R^2)}^2.$$
\end{remark}

To summarize, we extend the stochastic gradient $\nabla J(u,\xi)$, defined on the tangent space of the manifold (from line 5 of \cref{alg:stochastic_descent-manifold}), to the hold-all domain by solving the following \emph{deformation equation}: find $V = V(u,\xi) \in H_0^1(D,\R^2)$ s.t.
\begin{equation}
a^{\text{elas}}(V, W) = d J(u,\xi)[W] \quad \forall W\in H_0^1(D,\R^2).
\label{deformatio_equation}
\end{equation}
The negative solution $-V$ is a descent direction for $J(u,\xi)$ since 
$$dJ(u,\xi)[-V] = a^{\text{elas}}(V,-V) = -\lVert V\rVert_{H^1(D,\R^2)} \leq 0.$$

\section{Application to an interface identification problem}
\label{sec:model}
In this section, we formulate the stochastic shape optimization model, which we use to demonstrate \cref{alg:stochastic_descent-manifold}. The problem under consideration is an interface identification problem and has been studied in a number of texts  \cite{Buttazzo2003,Ito-Kunisch-Peichl,Sokolowski1991}. A motivation for this model is in electrical impedance tomography, where the material distribution of electrical properties such as electric conductivity and permittivity inside the body is to be determined \cite{Cheney1999,Kwon2002}. Moreover, electrical impedance tomography  is also considered in case of uncertain boundary conditions in \cite{dambrine2019incorporating}.

\subsection{Model formulation}
\label{subsection:model-formulation}
In the model, we allow for randomness in the material properties and random boundary inputs. For each random source, it is assumed that the probability distribution is known, for example by priorly obtained empirical samples.

We allow for uncertainty in material constants and boundary conditions by definition of a probability space $(\Omega, \mathcal{F}, \PP)$. The probability space is to be understood as a product space $(\Omega, \mathcal{F}, \PP) = (\Omega_g \times \Omega_\kappa, \mathcal{F}_g \times \mathcal{F}_\kappa,  \PP_{g} \times \PP_{\kappa})$. We define a boundary input function $g\colon\Go \times \Omega_g \rightarrow \R$ and a material coefficient
\begin{equation}\label{eq:randomfield}
\kappa\colon D \times \Omega_\kappa \rightarrow \R, (x,\omega) \mapsto \kin(\omega)\mathbbm{1}_{\Din}(x)+\kout(\omega)\mathbbm{1}_{\Dout}(x),
\end{equation}
 where $\kappa_i:\Omega \rightarrow \Xi_\kappa^i \subset \R$ are independent random variables and $\mathbbm{1}_{D_i}$ denotes the indicator function of the set $D_i$, for $i\in\{\text{in}, \text{out}\}$. To facilitate simulation, we make a standard finite-dimensional noise assumption. This is automatically satisfied for $\kappa$ with $\xi_\kappa(\omega) := (\kin(\omega),\kout(\omega))$. For $g$, we assume there exists a $m$-dimensional vector  $\xi_g(\omega) := (\xi_g^1(\omega), \dots, \xi_g^m(\omega))$ of real-valued, independent random variables $\xi_g^i:\Omega \rightarrow \Xi_g^i \subset \R$ such that
 $$g(x,\omega) = g(x,\xi_g(\omega)) \quad  \text{on } D \times \Omega.$$
To simplify notation, we set $\xi:=(\xi_\kappa, \xi_g)$, $\Xi := \Xi_\kappa^{\text{in}} \times \Xi_\kappa^{\text{out}} \times \Xi_g^1 \times \cdots \times \Xi_g^m$ and now write $\kappa(\xi) = \kappa(\cdot,\xi)$ and $g(\xi) = g(\cdot,\xi)$ for a given $\xi \in \Xi$.

Let $\bar{y}\colon D\to\R$ denote (deterministic) measurements and $\nu>0$ be a given constant. The outward normal vector to $D$ and the outward normal vector to $D_{\text{in}}$ are both denoted by $\n$. We define the objective functional for a fixed realization $\xi \in \Xi$ by
\begin{equation}
\label{objective}
J(u,\xi):=J^\text{obj}(u, \xi)+\nu J^\text{reg}(u),
\end{equation}
where 
\begin{align}
\label{Objective_TrackingType}
J^\text{obj}(u, \xi)  :=\frac{1}{2} \int_D (y(x,\xi)  - \bar{y}(x))^2 \, \dx\quad\text{ and }\quad J^\text{reg}(u) :=  \int_{u}  \ds.
\end{align}
The model problem subject to a random PDE in the strong form is as follows:
\begin{align}
\min_{u \in \mathcal{U}} \quad  \EE \Big[ J^\text{obj}(u, \xi) \Big] &+ \nu J^\text{reg}(u)\label{eq:problem} \\
\text{s.t.} \,\,\, y\colon D\times \Xi\to \R,&\, (x,\xi)\mapsto y(x,\xi) \text{ satisfies} \qquad \nonumber\\
- \nabla \cdot (\kappa \nabla y) &=  0,\quad \text{in } D \times \Xi \label{eq:PDE1} \\
\kappa \frac{\partial y}{\partial \n} &= g, \quad \text{in } \Go \times \Xi. \label{eq:PDE2}
\end{align}
The following continuity conditions are imposed for the state and flux at the interface:
\begin{equation}\label{eq:jumpconditions}
\left\llbracket \kappa \frac{\partial y}{\partial \n} \right\rrbracket = 0, \quad \llbracket y \rrbracket  = 0, \quad \text{in } u \times \Xi.
\end{equation}
Here, the jump symbol $\left\llbracket\cdot\right\rrbracket$ is defined on the interface $u$ by $\llbracket y \rrbracket := y_{\text{in}} - y_{\text{out}}$, where  $y_{ \text{in}} := \tr_{\text{in}}(y\vert_{\Din})$ and $y_{ \text{out}} := \tr_{\text{out}}(y \vert_{\Dout})$, and $\tr_{\text{in}}:\Din \rightarrow u$, $\tr_{\text{out}}:\Dout \rightarrow u$ are trace operators. We will often use the notation $y(\xi)=y(\cdot,\xi).$

With the tracking-type objective functional $J^\text{obj}$
the model is fitted to data measurements $\bar{y}$. Further, $J^\text{reg}$ in (\ref{eq:problem}) is a perimeter regularization and is often required for well-posedness; see for instance \cite[Section 1.1]{Sokolowski1991}.

\subsection{Shape differentiability and bounded variance}
\label{ssec:model-analytical}
In this section, we show shape differentiability for the model problem \eqref{eq:problem}-\eqref{eq:jumpconditions} as well the bound on the second moment of the stochastic gradient;  the latter condition is required by \cref{thm:convergence} in order for  \cref{alg:stochastic_descent-manifold} to converge. In this work, we do not verify the remaining assumptions of \cref{thm:convergence}.  Throughout this section, $c$ denotes a generic deterministic constant (not depending on $\xi$).

For a $r>0$, we define the real Hilbert space $H_{\text{av}}^r(D) := \{  v \in H^r(D) | \int_D v \,\dx = 0\}$ and denote its norm by $\lVert \cdot \rVert_{H^r(D)}.$
Recall that for a Banach space ($X$, $\lVert \cdot \rVert_X$) and a measure space $(\Xi, \mathcal{X}, P),$ the Bochner spaces $L^p(\Xi,X)$ and $L^\infty(\Xi,X)$ are defined as the sets of strongly $\mathcal{X}$-measurable functions $y:\Xi \rightarrow X$ such that
\begin{align*}
 \lVert y \rVert_{L^p(\Xi,X)} &:= \left( \int_\Xi \lVert y(\xi)\rVert^p_X \, \d P(\xi) \right)^{1/p},
 \quad \quad \lVert y \rVert_{L^\infty(\Xi, X)}:=\esssup_{\xi \in \Xi} \lVert y(\xi)\rVert_X
\end{align*}
are finite, respectively. The following technical assumptions are in force in this section.
\begin{assumption}\label{assumption1}
The domain $D\subset\R^2$ is assumed to be a bounded Lipschitz domain and $\bar{y} \in H^2(D)$. In addition, the random fields satisfy the following assumptions: (A1) There exist $\kappa^{\min}$, $\kappa^{\max} >0$ such that $\kappa_i(x, \omega) \in [\kappa^{\min},\kappa^{\max}]$ for almost every~$(x,\xi)\in D\times \Xi$ and for $i\in \{\textup{in},\textup{out}\}$ and (A2) $g \in {L^2(\Xi,H^{1/2}(\Go))}.$
\end{assumption}

Existence and uniqueness of solutions to the PDE constraint under these conditions is classical.
\begin{lemma}\label{lemma:existenceuniquenessoptimalcontrol-stochastic}
For almost every~$\xi \in \Xi$ and all $u\in \UU_D$, there exists a unique solution $y(\xi) = y(\cdot,\xi)\in \hH$ to \cref{eq:PDE1}--\cref{eq:jumpconditions}. Moreover, there exists a $C_1>0$ such that for almost every $\xi \in \Xi$,
\begin{align}
 \lVert y(\xi) \rVert_{H^1(D)} \leq C_1  \lVert g(\xi) \rVert_{H^{1/2}(\Go)}.\label{eq:stateequation-apriori-estimate}
\end{align}
\end{lemma}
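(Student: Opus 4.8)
The plan is to establish existence, uniqueness, and the \emph{a priori} estimate via the standard Lax--Milgram framework applied to the weak form of the elliptic transmission problem. First I would derive the variational formulation: testing \cref{eq:PDE1} against an arbitrary $w \in \hH$, integrating by parts over $\Din$ and $\Dout$ separately, and using the flux continuity condition $\llbracket \kappa \,\partial y/\partial\n\rrbracket = 0$ from \cref{eq:jumpconditions} to cancel the interface terms, I arrive at the problem: find $y(\xi) \in \hH$ such that
\begin{equation*}
b_\xi(y,w) := \int_D \kappa(\xi)\,\nabla y \cdot \nabla w \,\dx = \int_{\Go} g(\xi)\, \tr(w)\,\ds =: \ell_\xi(w) \qquad \forall w \in \hH.
\end{equation*}
Working in the quotient space $\hH$ (zero-average functions) is what makes the pure-Neumann problem well-posed: it removes the constant-function kernel that would otherwise obstruct uniqueness, and the compatibility condition $\int_{\Go} g\,\ds$ need not vanish since we have projected it out.

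Next I would verify the three Lax--Milgram hypotheses. Boundedness of $b_\xi$ follows from the uniform upper bound $\kappa \leq \kappa^{\max}$ in (A1) together with Cauchy--Schwarz, giving $|b_\xi(y,w)| \leq \kappa^{\max}\lVert y\rVert_{H^1(D)}\lVert w\rVert_{H^1(D)}$. Coercivity is the key structural point: using $\kappa \geq \kappa^{\min} > 0$ gives $b_\xi(y,y) \geq \kappa^{\min}\lvert y\rvert_{H^1(D)}^2$, and on the zero-average space $\hH$ the Poincaré--Wirtinger inequality promotes the $H^1$-seminorm to control the full norm, yielding $b_\xi(y,y) \geq \alpha \lVert y\rVert_{H^1(D)}^2$ with $\alpha$ depending only on $\kappa^{\min}$ and the Poincaré constant of $D$ (hence deterministic, uniform in $\xi$). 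Boundedness of the linear functional $\ell_\xi$ follows from the continuity of the trace operator $\tr\colon H^1(D)\to H^{1/2}(\Go)$ and its dual pairing, giving $\lvert\ell_\xi(w)\rvert \leq C_{\tr}\lVert g(\xi)\rVert_{H^{1/2}(\Go)}\lVert w\rVert_{H^1(D)}$, where $g(\xi)\in H^{1/2}(\Go)$ for almost every $\xi$ by (A2). Lax--Milgram then delivers a unique $y(\xi)\in\hH$.

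Finally, the \emph{a priori} estimate \cref{eq:stateequation-apriori-estimate} is obtained by testing with $w = y(\xi)$: combining coercivity on the left with the bound on $\ell_\xi$ on the right gives $\alpha\lVert y(\xi)\rVert_{H^1(D)}^2 \leq b_\xi(y,y) = \ell_\xi(y) \leq C_{\tr}\lVert g(\xi)\rVert_{H^{1/2}(\Go)}\lVert y(\xi)\rVert_{H^1(D)}$, and dividing through yields $\lVert y(\xi)\rVert_{H^1(D)} \leq C_1 \lVert g(\xi)\rVert_{H^{1/2}(\Go)}$ with $C_1 := C_{\tr}/\alpha$, which is deterministic and independent of $\xi$ and of the shape $u$ as required. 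I expect the main obstacle to be \emph{bookkeeping rather than depth}: one must argue carefully that the weak transmission formulation above is genuinely equivalent to the strong system \crefrange{eq:PDE1}{eq:jumpconditions} (in particular that the state-continuity condition $\llbracket y\rrbracket = 0$ is already encoded by seeking $y$ in $\hH \subset H^1(D)$, while the flux condition emerges as a natural boundary condition), and that the constant $C_1$ is genuinely uniform over all admissible shapes $u\in\UU_D$ — the latter because the Poincaré and trace constants depend only on the fixed hold-all domain $D$ and not on the interface $u$, which is precisely why the volume formulation is advantageous here.
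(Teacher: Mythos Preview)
Your proposal is correct and follows essentially the same approach as the paper: cast the transmission problem in weak form on $\hH$, apply Lax--Milgram using the uniform bounds on $\kappa$ from (A1) together with the Poincar\'e--Wirtinger inequality for coercivity, and obtain the a~priori bound by testing with the solution itself. Your write-up is in fact more explicit than the paper's on two points the authors leave implicit---that the transmission conditions are encoded in the weak $H^1$-formulation and that the constants depend only on the fixed hold-all $D$, not on the interface $u$---so no changes are needed.
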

\begin{proof}
See \cref{appendix-A}.
\end{proof}

We also need the following strong convergence result, which is required for both the proof of shape differentiability of $J$ and of $j$ in~\Cref{thm:shape_differentiability,thm:j-shape-differentiable}, respectively.  
\begin{lemma}
\label{strong-convergence-state-variables}
Let $D_t:= F_t^V(D)$, $\xi\in\Xi$ be a fixed realization, and $y$ be the solution to~\eqref{eq:PDE1}--\eqref{eq:jumpconditions}. Furthermore, we denote by $y^t: D \times \Xi \rightarrow \R, (x,\xi)\mapsto y^t(x,\xi)$ the solution to the \emph{perturbed state equation} 
\begin{equation}
\label{eq:perturbed_equation}
\int_D \kappa^t(\xi)A(t)\nabla y^t\cdot \nabla \psi\, \dx \, = \int_{\partial D} g(\xi)\psi\, \ds, 
\end{equation}
for all $\psi\in H^1(D)$. Then there exists $\tau>0$ and $c > 0$ such that for all $t\in[0,\tau]$
\begin{equation}
\label{eq:strong-convergence}
 \lVert y^t(\xi) - y(\xi) \rVert_{H^1(D)} \leq ct\, \lVert y(\xi) \rVert_{H^1(D)}.
\end{equation}
\end{lemma}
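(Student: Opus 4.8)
The plan is to transport the perturbed problem \eqref{eq:perturbed_equation} back to the fixed reference domain $D$ (already done implicitly in its statement) and compare it there with the unperturbed weak form: find $y(\xi)\in\hH$ with $\int_D \kappa(\xi)\nabla y\cdot\nabla\psi\,\dx=\int_{\Go}g(\xi)\psi\,\ds$ for all $\psi\in H^1(D)$, whose solvability is \cref{lemma:existenceuniquenessoptimalcontrol-stochastic}. Two structural observations place both problems on the same space. First, since $V\in C_0^\infty(D,\R^2)$ vanishes near $\Go$, the map $F_t^V$ fixes $\Go$ pointwise, so the right-hand sides of the two weak forms coincide and the matrix $A(t)=\det(DF_t^V)\,(DF_t^V)^{-1}(DF_t^V)^{-T}$ is purely geometric, independent of $\xi$, equal to $\id$ outside $\operatorname{supp}V$, and satisfies $A(0)=\id$. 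Second, because $F_t^V$ maps $\Din$ and $\Dout$ onto their images without altering the constants $\kin,\kout$, the transported coefficient equals $\kappa$, i.e.\ $\kappa^t(\xi)=\kappa(\xi)$.

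Subtracting the two weak forms and adding and subtracting $\kappa^t A(t)\nabla y$ gives, for every $\psi\in H^1(D)$,
\begin{equation*}
\int_D \kappa^t A(t)\,\nabla(y^t-y)\cdot\nabla\psi\,\dx = \int_D \big(\kappa\,\id - \kappa^t A(t)\big)\nabla y\cdot\nabla\psi\,\dx .
\end{equation*}
Both $y$ and $y^t$ are normalized to lie in $\hH$, so $y^t-y$ is mean-free and admissible; choosing $\psi=y^t-y$ reduces the claim to two uniform estimates. For the lower bound, continuity of $t\mapsto A(t)$ with $A(0)=\id$ yields a $\tau>0$ with $A(t)\geq\tfrac12\id$ on $D$ for all $t\in[0,\tau]$; combined with $\kappa\geq\kappa^{\min}>0$ from \cref{assumption1}(A1), the left-hand side is bounded below by $\tfrac{\kappa^{\min}}{2}\lVert\nabla(y^t-y)\rVert_{L^2(D)}^2$. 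For the right-hand side, the identity $\kappa\,\id-\kappa^t A(t)=\kappa(\id-A(t))$ and the Lipschitz dependence of $A$ on $t$ near $0$ (its entries are smooth rational functions of the bounded matrix $t\nabla V$, with $F_t^V$ invertible for small $t$) give $\lVert\kappa\,\id-\kappa^t A(t)\rVert_{L^\infty(D)}\leq \kappa^{\max}\lVert\id-A(t)\rVert_{L^\infty(D)}\leq c\,t$ uniformly in $\xi$.

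Applying Cauchy--Schwarz to the right-hand side and dividing by $\lVert\nabla(y^t-y)\rVert_{L^2(D)}$ then yields $\lVert\nabla(y^t-y)\rVert_{L^2(D)}\leq c'\,t\,\lVert\nabla y\rVert_{L^2(D)}$; since $y^t-y\in\hH$, the Poincaré--Wirtinger inequality upgrades the seminorm to the full $H^1(D)$-norm and produces \eqref{eq:strong-convergence}.

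The main obstacle is keeping every constant independent of the realization $\xi$: the coercivity constant must be extracted solely from $\kappa^{\min}$ and the deterministic matrix $A(t)$, while the perturbation bound must use only $\kappa^{\max}$ and $\lVert\id-A(t)\rVert_{L^\infty(D)}$, so that no $\xi$-dependent factor leaks into $c$ or $c'$. This is precisely what the uniform two-sided bounds of \cref{assumption1}(A1) together with the $\xi$-independence of $A(t)$ guarantee. The only genuinely analytic ingredient is the estimate $\lVert\id-A(t)\rVert_{L^\infty(D)}\leq c\,t$ on $[0,\tau]$, which rests on the invertibility of $F_t^V$ for small $t$ and the explicit form of $A(t)$.
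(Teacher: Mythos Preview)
Your argument is correct. It differs from the paper's proof in that the paper casts the perturbed problem as the minimization of an energy functional $E_\xi(t,\varphi)$, establishes uniform coercivity of $d_\varphi^2 E_\xi$, and then applies the mean value theorem to the identity
\[
\int_0^1 d_\varphi^2 E_\xi(t,y^t_r;y^t-y,y^t-y)\,\dr
= d_\varphi E_\xi(t,y;y^t-y)-d_\varphi E_\xi(0,y;y^t-y)
= t\, d_{t,\varphi}E_\xi(tr_t,y;y^t-y),
\]
bounding the left by coercivity and the right by the smoothness of $A(t)$. You instead subtract the two weak forms directly and test with $y^t-y$, which is the classical stability/perturbation argument for elliptic bilinear forms; it is shorter and avoids the energy machinery and the mean value theorem entirely. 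Your observation that $\kappa^t=\kappa$ (because the piecewise constant coefficient is defined relative to the moving interface and the pullback restores the original subdomains) is exactly what makes the right-hand side collapse to $\kappa(\id-A(t))$; the paper reaches the same effect by writing $d_t\kappa^t=(\nabla\kappa\cdot V)$ and noting $\nabla\kappa=0$ a.e. The paper's route buys two things: it simultaneously proves existence and uniqueness of $y^t$ via strict convexity of $E_\xi$, and it sets up the energy framework that is reused verbatim in the averaged adjoint proof of shape differentiability in \cref{appendix-B}. Your route is more elementary and self-contained for this lemma, but you should remark that the same uniform coercivity of $\kappa A(t)$ that you use also yields well-posedness of \eqref{eq:perturbed_equation} in $\hH$ via Lax--Milgram, since the statement implicitly asserts the existence of $y^t$.
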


\begin{proof}
See \cref{appendix-A}.
\end{proof}

\begin{theorem}
\label{thm:shape_differentiability}
For almost every $\xi \in \Xi$ and all $u\in \UU_D$, the shape functional  $J^{\textup{obj}}(u,\xi)$ defined in~\eqref{Objective_TrackingType} is shape differentiable. Furthermore, the weak formulation of the shape derivative for a fixed $\xi \in \Xi$ is given by
\begin{equation}
\label{shape_derivative_J_obj}
\begin{split}
d J^{\textup{obj}}(u,\xi)[W] &= \dfrac12\int_D  \divv(W) (y-\bar{y})^2 \, \dx - \int_D(y-\bar{y})\nabla\bar{y}\cdot W \, \dx\\ 
 &\quad\quad + \int_D \kappa (\divv(W)\id - \nabla W - \nabla W^T)\nabla y \cdot \nabla p \, \dx, 
\end{split}
\end{equation}  
where $\kappa = \kappa(\xi)$, $y=y(\xi)\in \hH$ is the weak solution of~\eqref{eq:PDE1}--\eqref{eq:jumpconditions}, and $p=p(\xi)\in \hH$ solves (with $\kappa=\kappa(\xi)$ and $y = y(\xi)$) the adjoint equation 
\begin{equation}\label{eq:adjoint-weak}
\int_D \kappa \nabla \varphi \cdot \nabla p\, \dx = - \int_D (y-\bar{y})\varphi \, \dx \quad  \forall \varphi \in \hH.
\end{equation} 
\end{theorem}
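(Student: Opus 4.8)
The plan is to compute the shape derivative via the averaged adjoint / Lagrangian approach, transporting the perturbed state equation back to the reference domain and differentiating at $t=0$. First I would introduce the transformation $F_t^V$ with Jacobian $DF_t^V = \id + t\nabla V$, set $A(t) := \det(DF_t^V)(DF_t^V)^{-1}(DF_t^V)^{-T}$, and change variables in $J^{\textup{obj}}(F_t^V(u),\xi)$ so that all integrals are over the fixed domain $D$. This produces the tracking term $\tfrac12\int_D \det(DF_t^V)(y^t - \bar y\circ F_t^V)^2\,\dx$, where $y^t$ solves the pulled-back state equation \eqref{eq:perturbed_equation}. Recalling the elementary expansions $\det(DF_t^V) = 1 + t\,\divv(V) + o(t)$ and $A(t) = \id + t(\divv(V)\id - \nabla V - \nabla V^T) + o(t)$, together with $\bar y \circ F_t^V = \bar y + t\,\nabla\bar y\cdot V + o(t)$, the material derivative of each factor is available by direct computation.

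The key step is then to differentiate $t\mapsto J^{\textup{obj}}(F_t^V(u),\xi)$ at $t=0^+$. Writing $\dot y := \tfrac{\d}{\dt}y^t|_{t=0}$ for the (weak) material derivative of the state, the product rule applied to the transported objective gives
\begin{equation*}
d J^{\textup{obj}}(u,\xi)[V] = \tfrac12\int_D \divv(V)(y-\bar y)^2\,\dx - \int_D (y-\bar y)\nabla\bar y\cdot V\,\dx + \int_D (y-\bar y)\dot y\,\dx.
\end{equation*}
The troublesome term is the last one, $\int_D (y-\bar y)\dot y\,\dx$, since it still contains the implicitly-defined $\dot y$. To eliminate it I would introduce the adjoint state $p$ solving \eqref{eq:adjoint-weak}, observe that its right-hand side is exactly $-\int_D (y-\bar y)\varphi\,\dx$, and then test the adjoint equation with $\varphi = \dot y$ (which lies in $\hH$ by \Cref{lemma:strong-convergence-state-variables}, rescaled appropriately). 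This yields $\int_D (y-\bar y)\dot y\,\dx = -\int_D \kappa\nabla\dot y\cdot\nabla p\,\dx$, trading the unknown $\dot y$ for a quantity I can extract from differentiating the state equation.

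To make the trade explicit I would differentiate the pulled-back state equation \eqref{eq:perturbed_equation} at $t=0$: testing against the fixed $p$ and using $\tfrac{\d}{\dt}A(t)|_{t=0} = \divv(V)\id - \nabla V - \nabla V^T$ gives
\begin{equation*}
\int_D \kappa\nabla\dot y\cdot\nabla p\,\dx = -\int_D \kappa(\divv(V)\id - \nabla V - \nabla V^T)\nabla y\cdot\nabla p\,\dx,
\end{equation*}
where the contribution from differentiating $\kappa^t$ vanishes because $\kappa$ is transported as a pure composition and the interface jump conditions are respected by $F_t^V$. Substituting this identity back removes $\dot y$ entirely and produces precisely the volume term in \eqref{shape_derivative_J_obj}. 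Finally, linearity and continuity of $V\mapsto d J^{\textup{obj}}(u,\xi)[V]$ on $C_0^\infty(D,\R^2)$ follow from the a priori bounds of \Cref{lemma:existenceuniquenessoptimalcontrol-stochastic} and the boundedness of the bilinear forms appearing above, establishing shape differentiability in the sense of \Cref{definition_ShapeDer}.

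The main obstacle I anticipate is rigorously justifying the existence of the material derivative $\dot y$ and the passage to the limit $t\to 0^+$ under the integral sign. This is exactly where \Cref{lemma:strong-convergence-state-variables} does the heavy lifting: the estimate \eqref{eq:strong-convergence} guarantees $y^t\to y$ strongly in $H^1(D)$ at rate $O(t)$, which both controls the difference quotients uniformly (so dominated convergence applies to the objective) and, after a standard compactness/uniqueness argument on the linearized equation, identifies the limit of $(y^t - y)/t$ as the weak solution $\dot y$. The secondary technical care needed is ensuring all the $o(t)$ remainders from the expansions of $\det(DF_t^V)$, $A(t)$, and $\bar y\circ F_t^V$ converge to zero in the appropriate norms, which is routine given the smoothness of $V$ and the regularity $\bar y\in H^2(D)$ assumed in \Cref{assumption1}.
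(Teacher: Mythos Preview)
Your argument is correct, but it follows a genuinely different route from the paper. You use the classical material-derivative approach: pull back to the reference domain, differentiate to obtain a term involving $\dot y=\tfrac{\d}{\dt}y^t|_{t=0}$, and eliminate $\dot y$ by combining the adjoint equation with the linearized state equation. The paper instead applies Sturm's \emph{averaged adjoint method}: it defines the parametrized Lagrangian $\mathcal{L}_\xi(t,\varphi,\psi)$ in \eqref{eq:LL_definition} and verifies the abstract hypotheses (H0)--(H3) of \cite{Sturm2015:1}. The crucial difference is that the paper never asserts that $\dot y$ exists; instead it introduces, for each $t$, the solution $p^t$ of an \emph{averaged} adjoint equation \eqref{eq:averaged_adjoint_weak} and shows (H3) that $p^{t_{n_k}}\rightharpoonup p$ weakly in $\hH$ along subsequences, which suffices to differentiate the reduced functional.

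What each approach buys: your route is more elementary and self-contained, but the step you flag as ``the main obstacle'' --- existence of $\dot y$ via boundedness of $(y^t-y)/t$ from \eqref{eq:strong-convergence}, weak compactness, and uniqueness for the linearized equation --- is genuinely additional work not supplied by \cref{strong-convergence-state-variables} alone (that lemma gives only the rate, not convergence of the quotient). The paper's route trades this for verifying Sturm's abstract conditions, which here reduce to Lax--Milgram for the averaged adjoint and a straightforward weak-limit identification; it is designed precisely to bypass the material derivative of the state. Both lead to the same volume expression \eqref{shape_derivative_J_obj}, and your handling of the $\kappa^t$ term (vanishing contribution since $\nabla\kappa=0$ a.e.) matches the paper's reasoning.
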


\begin{proof}
See \cref{appendix-B}.
\end{proof}

Solvability of the adjoint equation \eqref{eq:adjoint-weak} is needed for the proof of \cref{thm:shape_differentiability}.
\begin{corollary}\label{lemma:existenceuniquenessadjoint-stochastic}
For almost every $\xi \in \Xi$ and all $u\in \UU_D$, there exists a unique solution $p(\xi)=p(\cdot,\xi)\in \hH$ to \eqref{eq:adjoint-weak}. Moreover, there exists a constant $C_2>0$ such that for almost every $\xi \in \Xi$,
\begin{equation}\label{eq:adjointequation-apriori-estimate}
 \lVert p(\xi) \rVert_{H^1(D)} \leq C_2 \lVert y(\xi) - \bar{y} \rVert_{L^2(D)}.
\end{equation}
\end{corollary}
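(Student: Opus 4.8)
\textbf{Proof plan for \Cref{lemma:existenceuniquenessadjoint-stochastic}.}
The plan is to recognize the adjoint equation \eqref{eq:adjoint-weak} as a standard linear elliptic variational problem on the space $\hH = H_{\textup{av}}^1(D)$ and apply the Lax--Milgram theorem, exactly as one would for the state equation in \Cref{lemma:existenceuniquenessoptimalcontrol-stochastic}. First I would define the bilinear form $B(\varphi,p) := \int_D \kappa \nabla\varphi\cdot\nabla p\,\dx$ and the linear functional $\ell(\varphi) := -\int_D (y-\bar{y})\varphi\,\dx$ on $\hH$, so that \eqref{eq:adjoint-weak} reads $B(\varphi,p) = \ell(\varphi)$ for all $\varphi\in\hH$. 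The key point is that $\hH$ consists of functions with zero average, so the Poincar\'{e}--Wirtinger inequality makes the $H^1$-seminorm $|\cdot|_{H^1(D)}$ an equivalent norm on $\hH$; this is what rescues coercivity despite the absence of a zeroth-order term in $B$.

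The verification then proceeds in three short steps. For coercivity, I would use the lower bound $\kappa\geq\kappa^{\min}>0$ from assumption (A1) to write $B(p,p)\geq \kappa^{\min}|p|_{H^1(D)}^2$, and then invoke Poincar\'{e}--Wirtinger to bound $|p|_{H^1(D)}^2$ from below by a constant times $\lVert p\rVert_{H^1(D)}^2$ on $\hH$. For boundedness of $B$, the upper bound $\kappa\leq\kappa^{\max}$ together with Cauchy--Schwarz gives $|B(\varphi,p)|\leq \kappa^{\max}\lVert\varphi\rVert_{H^1(D)}\lVert p\rVert_{H^1(D)}$. For continuity of $\ell$, since $y\in\hH$ by \Cref{lemma:existenceuniquenessoptimalcontrol-stochastic} and $\bar{y}\in H^2(D)\subset L^2(D)$, Cauchy--Schwarz yields $|\ell(\varphi)|\leq \lVert y-\bar{y}\rVert_{L^2(D)}\lVert\varphi\rVert_{L^2(D)}\leq \lVert y-\bar{y}\rVert_{L^2(D)}\lVert\varphi\rVert_{H^1(D)}$. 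Lax--Milgram then delivers a unique $p(\xi)\in\hH$ for almost every $\xi$.

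The a priori estimate \eqref{eq:adjointequation-apriori-estimate} falls out of the same computation: testing \eqref{eq:adjoint-weak} with $\varphi = p$ and combining the coercivity lower bound with the continuity bound on $\ell$ gives
\begin{equation*}
c_P\,\kappa^{\min}\lVert p(\xi)\rVert_{H^1(D)}^2 \leq B(p,p) = \ell(p) \leq \lVert y(\xi)-\bar{y}\rVert_{L^2(D)}\lVert p(\xi)\rVert_{H^1(D)},
\end{equation*}
where $c_P>0$ is the Poincar\'{e}--Wirtinger constant, and dividing through by $\lVert p(\xi)\rVert_{H^1(D)}$ produces the claimed bound with $C_2 := (c_P\kappa^{\min})^{-1}$. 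I would note that $C_2$ is deterministic because $\kappa^{\min}$ and $c_P$ do not depend on $\xi$, which is essential for the later second-moment bounds.

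I do not anticipate a genuine obstacle here, since the argument is a textbook application of Lax--Milgram. The only point requiring minor care is confirming that the right-hand side functional $\ell$ is well-defined on the zero-average subspace and that testing against constants is harmless --- but this is immediate, since $p$ being sought in $\hH$ automatically restricts the test space, and the source $-(y-\bar{y})$ need not itself have zero average for the problem to be well-posed on $\hH$. The essential ingredients are precisely the uniform ellipticity bounds from assumption (A1) and the regularity of $y$ and $\bar{y}$ already established, so the corollary is a direct parallel of \Cref{lemma:existenceuniquenessoptimalcontrol-stochastic}.
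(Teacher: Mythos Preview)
Your proposal is correct and follows exactly the paper's approach: the paper simply invokes Lax--Milgram by analogy with \Cref{lemma:existenceuniquenessoptimalcontrol-stochastic} and derives \eqref{eq:adjointequation-apriori-estimate} from the inequality $\frac{\kappa^{\min}}{C_p^2+1}\lVert p(\xi)\rVert_{H^1(D)}^2 \leq \lVert y(\xi)-\bar{y}\rVert_{L^2(D)}\lVert p(\xi)\rVert_{H^1(D)}$, which is precisely your coercivity-plus-Cauchy--Schwarz argument with $\varphi=p$. Your write-up is more detailed than the paper's two-line sketch, but the content is identical.
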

\begin{proof}
See \cref{appendix-A}.
\end{proof}

Clearly, the perimeter regularization is shape differentiable (see e.g.~\cite[Section 3.3]{Sokolowski1991}). With $\iota:=\text{\text{div}}_{u}(\n)$ denoting the mean curvature of $u$, the expression of the shape derivative is given by
 \begin{equation}
\label{sd_j2}
d J^{\text{reg}}(u)[W] = \int_{ u} \iota\, W \cdot \n \,\ds.
\end{equation}

\begin{theorem}
\label{thm:j-shape-differentiable}
The function $j$ is shape differentiable for all $u\in \UU_D$.
\end{theorem}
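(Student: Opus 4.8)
The plan is to split $j$ into its stochastic tracking part and the deterministic perimeter part, and to obtain the differentiability of the stochastic part as an application of \cref{lemma:differentiability-expectation}. Write $j(u) = \EE[J^{\textup{obj}}(u,\xi)] + \nu J^{\textup{reg}}(u)$. The regularization term $J^{\textup{reg}}$ carries no randomness and is shape differentiable with derivative given in \eqref{sd_j2}, so it contributes nothing new to the argument; it therefore suffices to prove that $j^{\textup{obj}}(u) := \EE[J^{\textup{obj}}(u,\xi)]$ is shape differentiable at every $u \in \UU_D$.

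For the stochastic part, \cref{thm:shape_differentiability} already supplies shape differentiability of $J^{\textup{obj}}(\cdot,\xi)$ for almost every $\xi \in \Xi$, so the hypotheses of \cref{lemma:differentiability-expectation} are met except for the integrable domination \eqref{eq:difference-quotient-random-shape-derivative}. The whole proof thus reduces to producing, for each fixed direction $V \in C_0^\infty(D,\R^2)$, a threshold $\tau > 0$ and a $\PP$-integrable majorant $C \colon \Xi \to \R$ with $R_t^V(\xi) \leq C(\xi)$ uniformly in $t \in [0,\tau]$ and almost every $\xi$.

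To build this majorant I would first pull the perturbed functional back to the reference configuration through the diffeomorphism $F_t^V$, writing $J^{\textup{obj}}(F_t^V(u),\xi) = \tfrac12 \int_D (y^t(\xi) - \bar y \circ F_t^V)^2 \, |\det DF_t^V| \, \dx$, where $y^t(\xi)$ is the transported state solving the perturbed equation \eqref{eq:perturbed_equation}. On $[0,\tau]$ the Jacobian $\det DF_t^V$ and the factor $\bar y \circ F_t^V$ are smooth and bounded uniformly in $t$ (recall $\bar y \in H^2(D)$ embeds continuously into $C(\bar D)$ in two dimensions, and $V \in C_0^\infty$), so after dividing by $t$ the difference quotient $R_t^V(\xi)$ is controlled by $\|y^t(\xi)\|_{H^1(D)}^2$, $\|y(\xi)\|_{H^1(D)}^2$, and the attendant cross terms. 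The two preparatory lemmas then close the estimate: the strong-convergence bound \eqref{eq:strong-convergence} from \cref{strong-convergence-state-variables} gives $\|y^t(\xi) - y(\xi)\|_{H^1(D)} \leq c\,t\,\|y(\xi)\|_{H^1(D)}$ (so $\|y^t(\xi)\|_{H^1(D)}$ stays comparable to $\|y(\xi)\|_{H^1(D)}$ on $[0,\tau]$), while the a priori estimate \eqref{eq:stateequation-apriori-estimate} from \cref{lemma:existenceuniquenessoptimalcontrol-stochastic} gives $\|y(\xi)\|_{H^1(D)} \leq C_1 \|g(\xi)\|_{H^{1/2}(\Go)}$. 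Assembling these bounds majorizes $R_t^V(\xi)$ by $C(\xi) = c\bigl(1 + \|g(\xi)\|_{H^{1/2}(\Go)}^2\bigr)$, which is $\PP$-integrable exactly because $g \in L^2(\Xi, H^{1/2}(\Go))$ by assumption (A2) of \cref{assumption1}.

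With the domination in hand, \cref{lemma:differentiability-expectation} yields shape differentiability of $j^{\textup{obj}}$ together with the exchange formula $dj^{\textup{obj}}(u)[V] = \EE[dJ^{\textup{obj}}(u,\xi)[V]]$, and linearity and continuity of $V \mapsto dj^{\textup{obj}}(u)[V]$ transfer from the pointwise ones exactly as in that lemma; adding the deterministic $\nu\,dJ^{\textup{reg}}(u)[V]$ then gives shape differentiability of $j$ on $\UU_D$. I expect the main obstacle to be the third step: securing the $t$-uniform, $\PP$-integrable bound on the difference quotient. The care lies in verifying that every constant arising from the change of variables, the strong convergence, and the a priori estimate is deterministic, so that the only surviving $\xi$-dependence in the majorant enters through $\|g(\xi)\|_{H^{1/2}(\Go)}$, whose square is integrable by (A2).
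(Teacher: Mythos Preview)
Your proposal is correct and follows essentially the same route as the paper: verify the domination hypothesis of \cref{lemma:differentiability-expectation} by pulling the perturbed objective back to the reference domain, controlling $\lVert y^t(\xi)\rVert_{H^1(D)}$ through \cref{strong-convergence-state-variables} and \cref{lemma:existenceuniquenessoptimalcontrol-stochastic}, and then reading off a majorant quadratic in $\lVert g(\xi)\rVert_{H^{1/2}(\partial D)}$, whose integrability is exactly (A2). The only cosmetic differences are that the paper keeps the perimeter term inside $J$ and bounds its difference quotient via the surface Jacobian $\tilde{\eta}(t)$ (whereas you split it off as deterministic), and that the paper controls $\bar y^t$ through $L^2$-convergence rather than your Sobolev embedding; neither changes the argument.
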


\begin{proof}
We will verify the conditions of \cref{lemma:differentiability-expectation}. To that end, let $V \in C_0^\infty(D, \R^2)$ be an arbitrary vector field and let the perturbed shape be given by $u_t:=F_t^V(u)$. We observe the quantity $ R_t^V(\xi) :=(J(u_t,\xi) - J(u,\xi))/t$ and $y^t$ the solution of~\eqref{eq:perturbed_equation}. Now, using the transformations $\eta(t)\coloneqq \det(DF_t^V) $ and $\tilde{\eta}(t) \coloneqq \eta(t) \lVert (DF_t^V)^{-*} \cdot \n \rVert_2$ (cf.~\cite[p.~482]{Delfour-Zolesio-2001},~\cite[p.79]{Sokolowski1991}, respectively)
\begin{align*}
 tR_t^V(\xi)  &= \frac{1}{2} \int_{D_t} (y_t(\xi) - \bar{y})^2 \dx -  \frac{1}{2} \int_{D} (y(\xi)  - \bar{y})^2\dx +\nu\int_{u_t} \ds - \nu\int_u \ds\\
 &=\frac{1}{2} \int_D \eta(t) (y^t(\xi) - \bar{y}^t)^2  \dx - \frac{1}{2} \int_D (y(\xi)-\bar{y})^2  \dx + \nu \int_u (\tilde{\eta}(t) -1)\ds .
\end{align*}
where $y_t\in H^1(D_t)$ is the solution of the state equation when we replace $u$ by $u_t$. Thanks to~\cite[p.~526]{Delfour-Zolesio-2001}, we know that there exists a $\tau_1>0$ such that $\eta(t)$ is bounded for all $t\in[0,\tau_1]$. Therefore,
\begin{align}
 \nonumber t R_t^V(\xi) &\leq  c \, \frac12 \int_{D}(y^t(\xi) - \bar{y}^t)^2 \dx -  \frac{1}{2} \int_{D} (y(\xi)  - \bar{y})^2\dx +\nu\int_{u} (\tilde{\eta}(t)-1) \, \ds\\
\nonumber  &\leq c\int_{D}[(y^t(\xi))^2 + (\bar{y}^t)^2] \, \dx +\nu\int_{u} (\tilde{\eta}(t)-1) \, \ds\\
\label{eq:bound_Rt}& = c \,  [\lVert y^t(\xi)\rVert^2_{L^2(D)} + \lVert \bar{y}^t\rVert^2_{L^2(D)}] +\nu\int_{u} (\tilde{\eta}(t)-1) \, \ds.
\end{align}
Using~\cref{strong-convergence-state-variables} and the inverse triangular inequality, we get that there exists $\tau_2$ small enough such that by \eqref{eq:stateequation-apriori-estimate},
\begin{equation}
\label{eq:bound_yt}
\lVert y^t(\xi) \rVert_{H^1(D)} \leq (ct + 1)\lVert y(\xi)\rVert_{H^1(D)} \leq (ct + 1)C_1\lVert g(\xi)\rVert_{H^{1/2}(\partial D)} \quad \forall t\in[0,\tau_2].
\end{equation}

Now, since $\bar{y}\in L^2(D)$, we know by \cite[Lemma~2.16]{Sturm2014} that $\lim_{t\to 0}\lVert\bar{y}^t - \bar{y}\rVert_{L^2(D)}=0$. Thus, there exists $\tau_3$ small enough such that
\begin{equation}
\label{eq:bound_ybar}
\lVert \bar{y}^t\rVert_{L^2(D)} \leq 1 + \lVert \bar{y}\rVert_{L^2(D)}\quad\text{for all } t\in[0,\tau_3].
\end{equation}
Finally, by~\cite[Lemma~2.49]{Sokolowski1991} we know $\tilde{\eta}(t)$ is differentiable, therefore continuous, for $t\in[0,\tau_4]$ and $\tau_4$ small enough. Then, there exists $\widetilde{C}>0$ such that $|\tilde{\eta}(t)|< \widetilde{C}$ for all $t\in[0,\tau_4]$. 
Therefore, by \eqref{eq:bound_yt} and \eqref{eq:bound_ybar}, \eqref{eq:bound_Rt} becomes
\[
t R_t^V(\xi) \leq c\left[\left((c\tau +1)C_1\lVert g(\xi)\rVert_{H^{1/2}(\partial D)}\right)^2 +(1+\lVert \bar{y}\rVert_{L^2(D)})^2\right] +\nu\int_{u} (\tilde{C}-1) \, \ds  =:C(\xi).
\]
with $\tau:=\min\{\tau_1,\tau_2,\tau_3,\tau_4\}$. Thus, we have obtained a dominating function that is $\PP$-integrable by \cref{assumption1}, (A2). By \cref{lemma:differentiability-expectation}, we have the conclusion.
\end{proof}

We now show that the second moment of the stochastic gradient is bounded as required in \cref{thm:convergence}. Recall that $v = v(u,\xi)$ is generated by the solution $V \in H_0^1(D,\R^2)$ to \eqref{deformatio_equation} with $v=\textup{tr}(V)\cdot \n$. The assumption that the boundary of $D$ is smooth is used to obtain higher regularity of the state and adjoint solutions. 

\begin{lemma}
\label{lemma:bounded-variance}
Assume that the boundary of $D$ is of class $C^2$. Then there exists a constant $M>0$ such that for all $u \in \UU_D$,
 \begin{equation}
  \label{eq:bounded-variance-model-problem}
 \EE[\lVert v(u,\xi)\rVert^2] \leq M.
 \end{equation}

\end{lemma}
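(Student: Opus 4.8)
The plan is to reduce \eqref{eq:bounded-variance-model-problem} to an $H^1$-bound on the elasticity extension $V=V(u,\xi)$ of the stochastic gradient, to bound $V$ by testing the deformation equation against itself, and then to control the resulting shape derivative term by term using the volume formula from \cref{thm:shape_differentiability} together with elliptic regularity and Sobolev embeddings in two dimensions.

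First I would invoke \cref{remark:convergence-manifold-implies-convergence-holdall}, which gives $\lVert v(u,\xi)\rVert^2 = a^{\text{elas}}(V,V)\leq K\lVert V\rVert_{H^1(D,\R^2)}^2$; hence it suffices to bound $\EE[\lVert V\rVert_{H^1(D,\R^2)}^2]$ uniformly in $u$. Testing the deformation equation \eqref{deformatio_equation} with $W=V$ and using coercivity yields
\begin{equation*}
k\lVert V\rVert_{H^1(D,\R^2)}^2 \leq a^{\text{elas}}(V,V) = dJ(u,\xi)[V].
\end{equation*}
So the argument comes down to a bound of the form $dJ(u,\xi)[V]\leq c(\xi)\lVert V\rVert_{H^1(D,\R^2)}$ with $c(\xi)$ independent of $u$ and with finite second moment; this gives $\lVert V\rVert_{H^1}\leq c(\xi)/k$, and therefore $\lVert v(u,\xi)\rVert^2\leq (K/k^2)\,c(\xi)^2$, which I integrate over $\xi$ at the end.

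Next I would split $dJ(u,\xi)[V]=dJ^{\text{obj}}(u,\xi)[V]+\nu\,dJ^{\text{reg}}(u)[V]$ and estimate each piece. For the objective I use the volume form \eqref{shape_derivative_J_obj} and Hölder's inequality: the quadratic term is bounded by $\tfrac12\lVert\divv V\rVert_{L^2}\lVert y-\bar y\rVert_{L^4}^2$, the mixed term by $\lVert y-\bar y\rVert_{L^4}\lVert\nabla\bar y\rVert_{L^4}\lVert V\rVert_{L^2}$, and the third term, using $\lvert\divv(V)\id-\nabla V-\nabla V^{T}\rvert\leq c\lvert\nabla V\rvert$ and $\kappa\leq\kappa^{\max}$, by $c\,\kappa^{\max}\lVert\nabla V\rVert_{L^2}\lVert\nabla y\rVert_{L^4}\lVert\nabla p\rVert_{L^4}$. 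Every term is linear in $\lVert V\rVert_{H^1}$, and the two-dimensional embedding $H^1(D)\hookrightarrow L^4(D)$ (with $\bar y\in H^2(D)$ giving $\nabla\bar y\in L^4$) turns the remaining factors into Sobolev norms of $y$ and $p$. The perimeter term I handle through its boundary form \eqref{sd_j2}, bounding $\lvert\nu\int_u\iota\,V\cdot\n\,\ds\rvert\leq\nu\lVert\iota\rVert_{L^2(u)}\lVert\tr(V)\rVert_{L^2(u)}\leq c\lVert V\rVert_{H^1}$ by the trace theorem, the curvature $\iota$ being bounded because $u$ is smooth and deterministic.

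The genuinely new ingredient, and the main obstacle, is that the third term requires $\nabla y,\nabla p\in L^4(D)$, which does not follow from \cref{lemma:existenceuniquenessoptimalcontrol-stochastic,lemma:existenceuniquenessadjoint-stochastic} (these only supply $H^1$). This is exactly where the hypothesis that $\partial D$ is of class $C^2$ enters: by elliptic regularity for the transmission problem — the interface $u$ being smooth and $\kappa$ piecewise constant — one obtains $y(\xi),p(\xi)\in H^2(\Din)\cap H^2(\Dout)$ with estimates $\lVert y(\xi)\rVert_{H^2(\Din)}+\lVert y(\xi)\rVert_{H^2(\Dout)}\leq c\lVert g(\xi)\rVert_{H^{1/2}(\partial D)}$ and the analogous bound for $p$ in terms of $\lVert y(\xi)-\bar y\rVert_{L^2(D)}$, so that $\nabla y,\nabla p\in L^4(D)$. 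Combining this with \eqref{eq:stateequation-apriori-estimate} and \eqref{eq:adjointequation-apriori-estimate} yields $dJ(u,\xi)[V]\leq c\big(\lVert g(\xi)\rVert_{H^{1/2}(\partial D)}^2+\lVert g(\xi)\rVert_{H^{1/2}(\partial D)}+1\big)\lVert V\rVert_{H^1(D,\R^2)}=:c(\xi)\lVert V\rVert_{H^1}$, whence $\lVert v(u,\xi)\rVert^2\leq (K/k^2)\,c(\xi)^2$. Because the objective is quadratic in $y$, the factor $c(\xi)^2$ is a degree-four polynomial in $\lVert g(\xi)\rVert_{H^{1/2}(\partial D)}$, so taking expectations needs a finite fourth moment of $g$ — slightly more than \cref{assumption1}(A2) — after which the constant $M$ follows. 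The two points demanding the most care are (i) that the constant in the $H^2$-estimate be \emph{uniform} over all interfaces $u\in\UU_D$, which presumably forces the shapes to remain in a uniformly regular family (consistent with the bounded set $\mathcal{C}$ in \cref{thm:convergence}), and (ii) the moment requirement on $g$ just mentioned.
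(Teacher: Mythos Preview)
Your argument is essentially identical to the paper's: the same reduction via coercivity and boundedness of $a^{\text{elas}}$, the same term-by-term H\"older estimate of \eqref{shape_derivative_J_obj} using the embedding $H^1\hookrightarrow L^4$, the same piecewise-$H^2$ regularity for the transmission problem (the paper cites \cite{Costabel2010} for this and uses the notation $PH^2(D)$), and the same trace-theorem bound on \eqref{sd_j2}. The two caveats you flag---uniformity in $u$ of the elliptic-regularity constants and the need for a fourth moment of $g$ rather than the second moment in \cref{assumption1}(A2)---are not addressed more carefully in the paper either; it simply asserts $C\in L^2(\Xi)$ ``from \cref{assumption1}, (A2)'' and concludes by letting $u$ be arbitrary, so your reservations are well placed.
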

\begin{proof}
Let $u\in \UU_D$ be arbitrary but fixed. We denote the norm on the piecewise Sobolev space 
$PH^k(D):=\{ v=v_{\text{in}}\mathbbm{1}_{D_{\text{in}}} + v_{\text{out}}\mathbbm{1}_{D_{\text{in}}} \, \vert \, v_\text{in} \in H^k(D_{\text{in}}), v_\text{out} \in H^k(D_{\text{out}})\}$
by $\lVert v \rVert_{PH^k(D)}^2:=\lVert v \rVert_{H^k(D_{\text{in}})}^2 + \lVert v \rVert_{H^k(D_{\text{out}})}^2$. 
\paragraph{Part 1}
Using standard arguments adapted to the function space $H_{\text{av}}^1(D)$ (see e.g.~Section 3.2 from~\cite{Ito-Kunisch-Peichl}), it is possible to show that $y|_{D_{i}}, p|_{D_{i}} \in H^2(D_{i})$ for $i \in \{\text{in, out}\}$. We use the fact that the boundary of the domain $D$ is smooth enough, so by \cite[Theorem 5.2.1]{Costabel2010}, we have for a fixed $\xi\in \Xi$ and $y=y(\xi), p=p(\xi)$ the following a priori bounds
\begin{equation}
\label{eq:H2-bounds}
\begin{aligned}
 \lVert y \rVert_{PH^2(D)} &\leq c (\lVert g\rVert_{H^{1/2}(\partial D)} + \lVert y \rVert_{H^1(D)}),\\
  \lVert p \rVert_{PH^2(D)} &\leq c (\lVert y - \bar{y}\rVert_{L^2(D)} + \lVert p \rVert_{H^1(D)}).
\end{aligned}
\end{equation}

\paragraph{Part 2} We now show that there exists $C \in L^2(\Xi)$ such that for all $W \in H_0^1(D,\R^2)$,
\begin{equation}
 \label{eq:bound-shape-derivative-second-moment}
 d J(u,\xi)[W] \leq C(\xi) \lVert W \rVert_{H^1(D,\R^2)}.
\end{equation}

We use the fact that $H^1(D)$ is compactly embedded in $L^4(D)$ (cf.~\cite[p.~345]{Alt2012}).  Notice that 
\begin{align*}
\lVert \nabla y \rVert_{L^4(D)} &= \lVert \nabla y \rVert_{L^4(D_{\text{in}})} + \lVert \nabla y \rVert_{L^4(D_{\text{out}})} \leq c\left(\lVert \nabla y \rVert_{H^1(D_{\text{in}})} + \lVert \nabla y \rVert_{H^1(D_{\text{out}})}\right)\\
&\leq c \left(\lVert y \rVert_{H^2 (D_{\text{in}})} + \lVert y \rVert_{H^2 (D_{\text{out}})}\right) \leq c(\lVert y \rVert_{PH^2 (D)}).
\end{align*}
Now, by \eqref{shape_derivative_J_obj}, we obtain by elementary inequalities and the successive invocation of the H\"older's inequality that
\begin{align*}
 &|d J^{\text{obj}}(u,\xi)[W]| \\
 &\leq \tfrac{1}{2} \lVert \divv(W) (y-\bar{y})^2 \rVert_{L^1(D)}+\lVert(y-\bar{y})\nabla\bar{y}\cdot W \rVert_{L^1(D)} \\
 &\qquad +\lVert \kappa (\text{div}(W) \id - \nabla W - \nabla W^T)\nabla y \cdot \nabla p \rVert_{L^1(D)} \\
 & \leq c ( \lVert \text{div}(W)\rVert_{L^2(D)} \lVert y - \bar{y}\rVert_{L^4(D)}^2 + \lVert W \rVert_{L^2(D,\R^2)} \lVert y-\bar{y} \rVert_{L^4(D)} \lVert \nabla \bar{y} \rVert_{L^4(D)} \\
 & \qquad + \lVert \kappa \rVert_{L^\infty(\Xi)} ( \lVert \text{div}(W) \rVert_{L^2(D)} +| W |_{H^1(D,\R^2)}) \lVert \nabla y \rVert_{L^4(D)} \lVert \nabla p \rVert_{L^4(D)}) \\
 & \leq c (\lVert y - \bar{y}\rVert_{L^4(D)}^2 + \lVert y-\bar{y} \rVert_{L^4(D)} \lVert \nabla \bar{y} \rVert_{L^4(D)}  + \lVert \nabla y \rVert_{L^4(D)} \lVert \nabla p \rVert_{L^4(D)}) \lVert W \rVert_{H^1(D,\R^2)}\\
 & \leq c (\lVert y - \bar{y}\rVert_{H^1(D)}^2 + \lVert y-\bar{y} \rVert_{H^1(D)} \lVert \nabla \bar{y} \rVert_{H^1(D)} +\lVert y \rVert_{PH^2(\tilde{D})} \lVert p\rVert_{PH^2(\tilde{D})} ) \lVert W \rVert_{H^1(D,\R^2)}.
\end{align*}
Using \eqref{eq:stateequation-apriori-estimate}, \eqref{eq:adjointequation-apriori-estimate}, \eqref{eq:H2-bounds}, as well as the assumption of measurability from \cref{assumption1}, (A2), we obtain a $C \in L^2(\Xi)$ such that $d J^{\textup{obj}}(u,\xi)[W] \leq C(\xi) \lVert W \rVert_{H^1(D,\R^2)}$. 

The boundary term \eqref{sd_j2} can be bounded in a similar way by using the trace theorem \cite[p.~279]{Alt2012} and observing
\begin{align*}
|d J^{\text{reg}}(u)[W]| & \leq \lVert W \rVert_{L^2(u, \R^2)} \lVert \iota \n \rVert_{L^2(u, \R^2)} \leq c \lVert W \rVert_{H^1(D, \R^2)} \lVert \iota \n \rVert_{L^2(u, \R^2)}.
\end{align*}
Finally, we have obtained \eqref{eq:bound-shape-derivative-second-moment}. 

\paragraph{Part 3} Now, by coercivity of $a^{\textup{elas}}(\cdot,\cdot)$ and \cref{remark:convergence-manifold-implies-convergence-holdall}, for $V \in H_0^1(D, \R^2)$ satisfying \eqref{deformatio_equation},
\begin{equation}
\label{eq:variance-term-coercivity-bilinear-form}
 k \lVert V \rVert_{H^1(D,\R^2)}^2 \leq a^{\textup{elas}}( V, V) = d J(u,\xi)[ V] \leq C(\xi) \lVert  V \rVert_{H^1(D,\R^2)}
\end{equation}
so in particular, $\lVert V \rVert_{H^1(D,\R^2)} \leq C(\xi)/k$, implying $\EE[\lVert V \rVert_{H^1(D,\R^2)}^2]$ is finite since we have $C \in L^2(\Xi)$. By boundedness of $a^{\textup{elas}}(\cdot,\cdot)$, we have
\begin{equation}
\label{eq:variance-term-boundedness-bilinear-form}
\lVert v(u,\xi) \rVert^2 = G(v,v) = a^{\textup{elas}}(V,V) \leq K \lVert V\rVert^2_{H^1(D,\R^2)}.
\end{equation}
Thus, combining \eqref{eq:variance-term-coercivity-bilinear-form} and \eqref{eq:variance-term-boundedness-bilinear-form}, we have 
$$\EE[\lVert v(u,\xi) \rVert^2] \leq K \EE[\lVert V \rVert_{H^1(D,\R^2)}^2] < \infty,$$
so \eqref{eq:bounded-variance-model-problem} is satisfied. Since $u$ was chosen to be arbitrary, we have the conclusion.
\end{proof}

\section{Numerical results}
\label{sec:Numerics}
In this section, we present results of numerical experiments to demonstrate the performance of \cref{alg:stochastic_descent-manifold}.
It is worth mentioning that interface identification problems are highly ill-posed; therefore, their numerical solution is extremely challenging. Most of the previous work in this direction has dealt with the identification of convex and/or singles shapes (without a stochastic model). To demonstrate the performance of the algorithm, we are including an example in which we identify multiple nonconvex shapes.
In~\cref{subsec:simple_shapes}, we present the numerical solution of the model problem from \cref{sec:model}. Additionally, we verify the Lipschitz gradient assumption numerically. In~\cref{subsec:multiple_shapes}, we show that the algorithm can also be applied to more realistic applications involving the identification of multiple shapes. 

The numerical solution of shape optimization problems has many challenges. For methods relying on mesh deformation, one challenge is to keep the mesh quality under control. We have discussed this issue in more detail in \cite{Geiersbach2019}. As in \cite{Geiersbach2019}, we choose the Lam\'e parameters from \eqref{eq:linear-elasticity-bilinear-form} to be $\lambda=0$ and solve a Poisson problem to compute $\mu$; we also restrict test functions in the assembly of the shape derivative as described in \cite{Geiersbach2019}.

To update the shapes according to \cref{alg:stochastic_descent-manifold}, we need to compute the exponential map.
This computation is prohibitively expensive in the most applications because a calculus of variations problem must be solved or the Christoffel symbols need be known. 
Therefore, we approximate it using a retraction. 
We use the following twice differentiable\footnote{The chosen retraction (\ref{retraction}) is obviously twice differentiable as required by our theory. The second derivative is given by the zero element of the tangent space.}  retraction as in \cite{SchulzWelker}: 
\begin{equation}
\label{retraction}
\mathcal{R}_u\colon T_u\mathcal{U} \to \mathcal{U},\, 
v \mapsto \mathcal{R}_u(v)\coloneqq u+v. 
\end{equation}
We note that the retraction is only a local approximation; for large vector fields, the image of this function may no longer belong to $B_e$. This retraction is closely related to the perturbation of the identity, which is defined for vector fields on the domain $D$.  Given a starting shape $u_{n+1}$ in the $n$-{th} iteration of \cref{alg:stochastic_descent-manifold}, the perturbation of the identity acting on the domain $D$ in the direction $V_n$, where $V_n$ solves~\eqref{deformatio_equation}, gives
\[
D(u_{n+1}) = \{x \in D \, | \, x = x_n + t_n V_n \}.
\]
As vector fields induced from solving~\eqref{deformatio_equation} have less regularity than is required on the manifold, we remark that the shape $u_{n+1}$ resulting from this update could leave the manifold $B_e$. To summarize, either large or less smooth vector fields can contribute to the iterate $u_{n+1}$ leaving the manifold. One indication that the iterate has left the manifold would be that the curve $u_{n+1}$ develops corners; however, since we discretize the curve this is not able to be observed numerically. Another possibility is that the curve $u_{n+1}$ self-intersects. One way to avoid this behavior is by preventing the underlying mesh to break (meaning elements from the finite element discretization overlap). We avoided broken meshes as long as the step-size was not chosen to be too large.
 
In the following experiments, we assume the random parameters are distributed according to $\mathcal{N}(\rho,\sigma,a,b)$, which is the truncated normal distribution with parameters $\rho$ and $\sigma$ and bounds $a,b$. The details of the parameters will be given in each experiment. 
The experiments were performed in a CPU Intel Core i7-7500 with 2.7 GHz and 15GB RAM.

\subsection{Single shapes}
\label{section:single_shapes}
\label{subsec:simple_shapes}
This experiment can be understood as the identification of a human lung, where the target $\bar{y}$ is to be obtained using electrical impedance tomography. 
We set $D=[-1,0]\times[-0.5,0.5]$ and the shape to be identified is shown in~\Cref{fig:simple_data} (left). For the numerical experiments, we make a simplification and consider the boundary data $g \equiv 10$ to be deterministic. 
\begin{figure}
\begin{center}
\includegraphics[width = 4.5cm,height= 4cm]{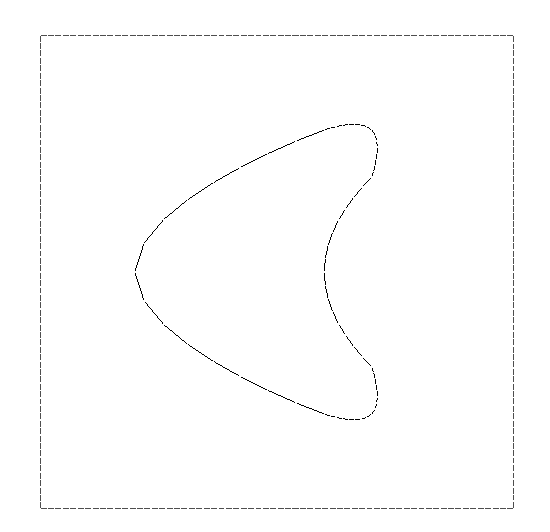}
\qquad
\includegraphics[width = 5.5cm,height= 4cm]{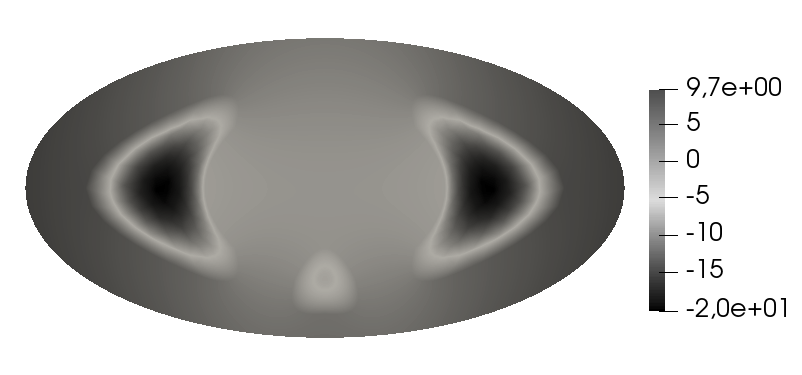}
\end{center}
\caption{Single shape: target shape (left) and input data $\bar{y}$ (right).}
\label{fig:simple_data}
\end{figure}
On $D$, we generate a triangular mesh of 3006 nodes and 6074 elements, and solve the state equation~\eqref{eq:PDE1}--\eqref{eq:jumpconditions} with the parameters $\kappaTrunkdet = 1$ and $\kappaLungsdet = 0.005$. The solution of this equation corresponds to $\bar{y}$  and is depicted in~\Cref{fig:simple_data} (right).

For the stochastic model, we consider conductivity parameters that follow the distributions: $\kappaTrunk \sim \mathcal{N}(\kappaTrunkdet,10^{-4},0.7,1.3)$ and $\kappaLungs \sim \mathcal{N}(\kappaLungsdet,10^{-4},2.5 \cdot 10^{-3},7.5 \cdot 10^{-3})$. The parameter for the perimeter regularization is fixed to $\nu = 10^{-6}$. 
Since the retraction is only defined locally, we need to choose step-sizes that are small enough to ensure that iterates do not leave the manifold.
After tuning, we have obtained that a reasonable choice for the step size rule is 
$t_n= \tfrac{0.016}{n}$, which was obtained after tuning.
We choose  $\mu_{\min} = 5$ and $\mu_{\max}= 17$ for the computation of $\mu_n$ as discussed in~\cite{Geiersbach2019}. We let the algorithm iterate 500 times and the initial, intermediate and final shapes obtained are depicted in~\Cref{fig:iterations_simple}. The experiment took 6.5828 minutes. 

\begin{figure}
\begin{center}
\includegraphics[scale=0.13]{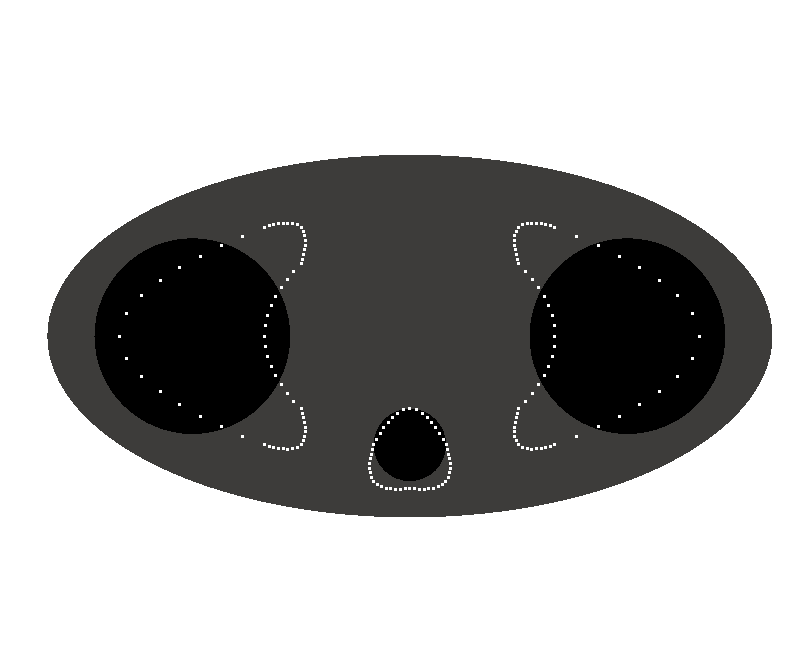}
\quad
\includegraphics[scale=0.13]{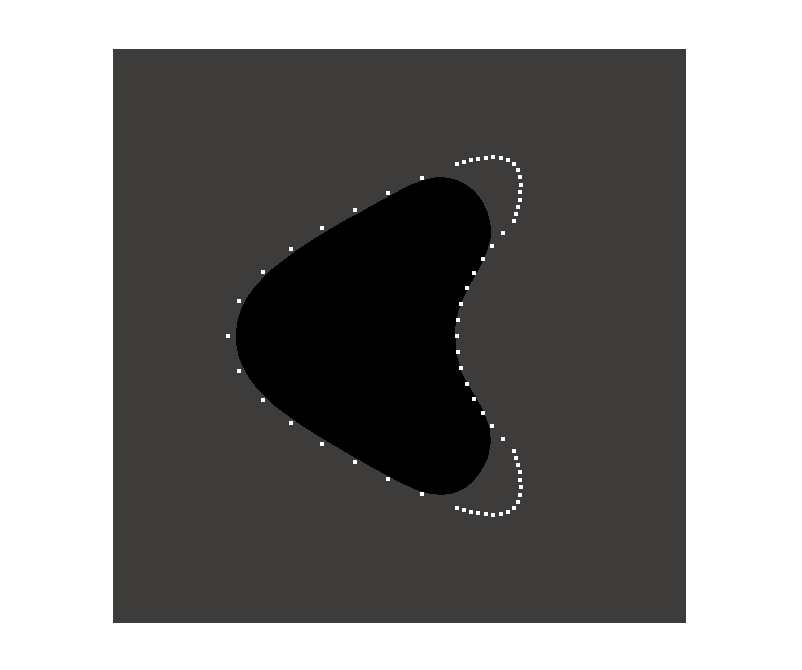}
\quad
\includegraphics[scale=0.13]{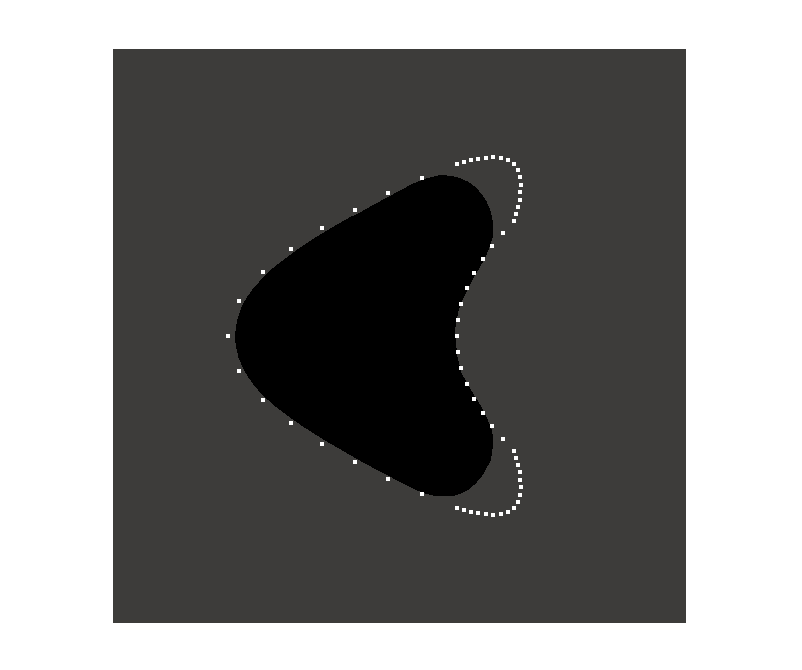}
\end{center}
\caption{Single shapes: initial (left), 250th (middle) and 500th (right).}
\label{fig:iterations_simple}
\end{figure}

The behavior of the decreasing of the objective function as depicted in~\Cref{fig:decreasing_simple} (left) demonstrates the typical behavior of the stochastic gradient method. According to \cref{remark:convergence-manifold-implies-convergence-holdall}, we expect the $H^1$ norm of the deformation field to converge to zero, which we can observe in \Cref{fig:decreasing_simple} (right).  We emphasize that oscillations in the plots come from the fact that we are using single estimates  $J(u_n,\xi_n)$ for the function value $j(u_n)$ along with the fact that the stochastic gradient method is not a descent method; for this reason, we observe oscillations in $\lVert V_n \rVert_{H^1(D,\R^2)}$. 
  
\begin{figure}
\begin{center}
\includegraphics[scale=0.5]{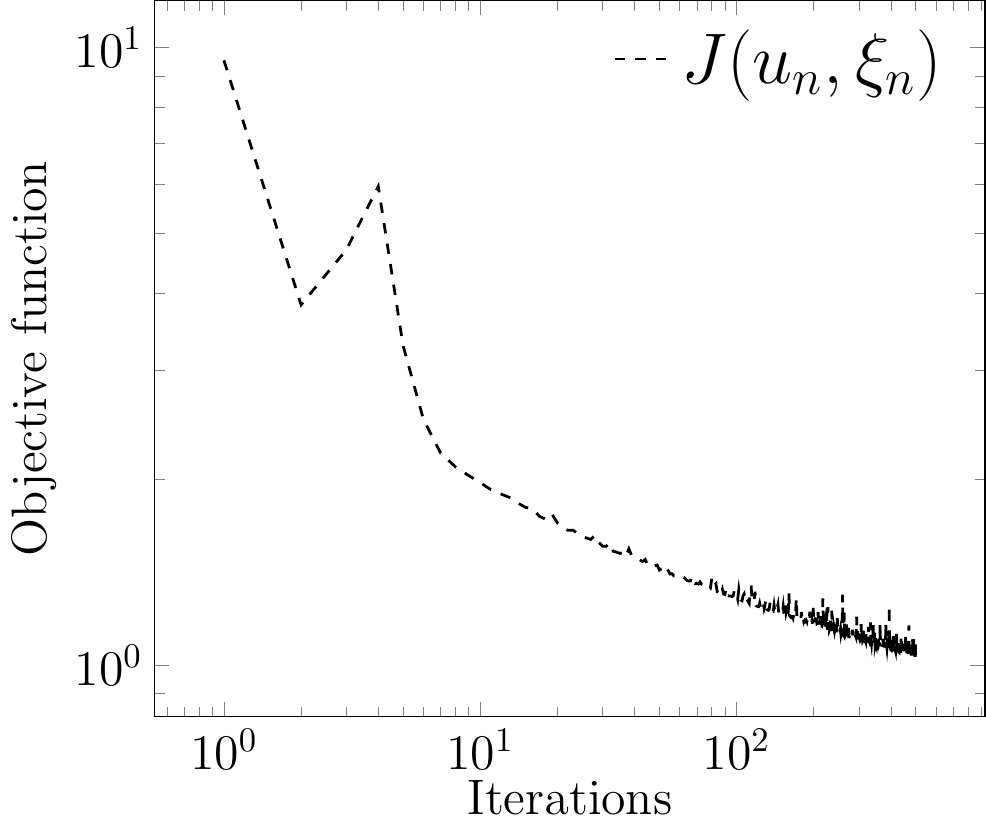}
\qquad
\includegraphics[scale=0.5]{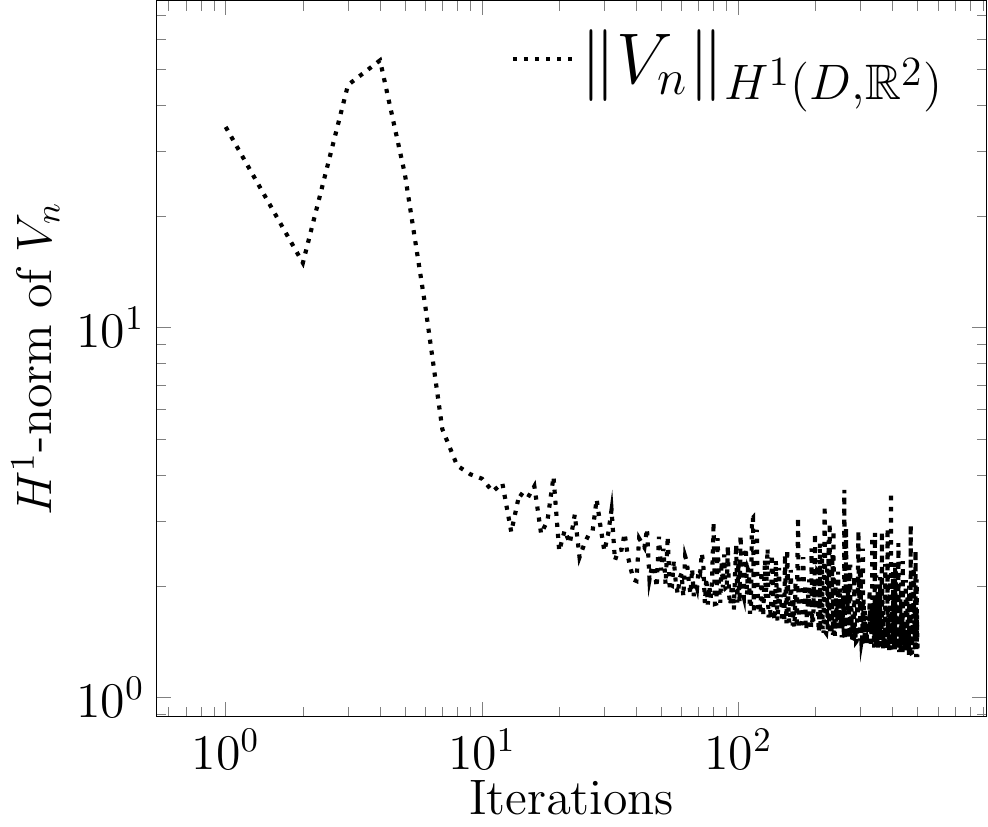}
\end{center}
\caption{Single shapes: behavior of the objective (left) and $H^1$-norm of the deformation field (right) in a log/log scale.}
\label{fig:decreasing_simple}
\end{figure} 

\subsection*{Numerical verification of assumptions for convergence}
In this test, we numerically approximate the Lipschitz constant from the condition \eqref{eq:Lipschitz-condition} for the gradient of $j(u)$. While this cannot provide us with the value for the constant over all shapes contained in $D$, this experiments gives us insight into its magnitude along the sample path. As should be evident by the calculations presented in the proof for \cref{thm:shape_differentiability}, a rigorous proof of higher-order derivatives would be quite lengthy. 

As in \cite{LuftWelker}, we approximate the distance $\d(u,\tilde{u})$ between between two shapes $u,\tilde{u}$ by
${\d}^{\text{approx}}(u,\tilde{u}):= \int_{u} \max_{y\in \tilde{u}}\|x-y\|_2\, \dx.$ For the bound on the gradient of $j$, we use the fact that $P_{1,0}$ is an isometry and the definition of $\nabla j$ to get the second inequality followed by Jensen's inequality and \cref{Steklov_identity} to get
\begin{align*}
 \lVert P_{1,0} \nabla j(u_{n}) - \nabla j(u_1) \rVert 
 &\leq \lVert P_{1,0} \nabla j(u_{n})\rVert + \lVert \nabla j(u_1)\rVert\\
 &\leq \lVert \EE[\nabla J(u_{n},\xi)]\rVert + \lVert \EE[\nabla J(u_{1},\xi)]\rVert\\
&\leq \EE[\sqrt{dJ(u_{n},\xi)[V_{n}]}] + \EE[\sqrt{dJ(u_{1},\xi)[V_{1}]}].
\end{align*}
We use the approximation 
\begin{equation}
\label{eq:approximation}
\EE[\sqrt{dJ(u_{n},\xi)[V_{n}]}] \approx \frac{1}{m}\sum_{j=1}^m \sqrt{dJ(u_{n},\xi_{n}^{j})[V_{n}]},
\end{equation}
where $m=100$ new i.i.d.~samples $\xi_{i}^j$, distributed as described in \cref{section:single_shapes}, were drawn at iteration $n$ for $j=1, \dots, m$. For all iterations, we compute the quotient
\begin{equation}
\label{eq:L_j}
L_n \coloneqq \frac{\frac{1}{m}\displaystyle\sum_{j=1}^m \left(\sqrt{dJ(u_{n},\xi_{n}^{j})[V_{n}]} + \sqrt{dJ(u_{1},\xi_1^{j})[V_{1}]} \right)}{\d^{\text{approx}}(u_n,u_1)}
\end{equation}
and we show in \Cref{fig:Lipschitz_j} that for every iteration this value is bounded. 

\begin{figure}
\centering
\includegraphics[scale=0.5]{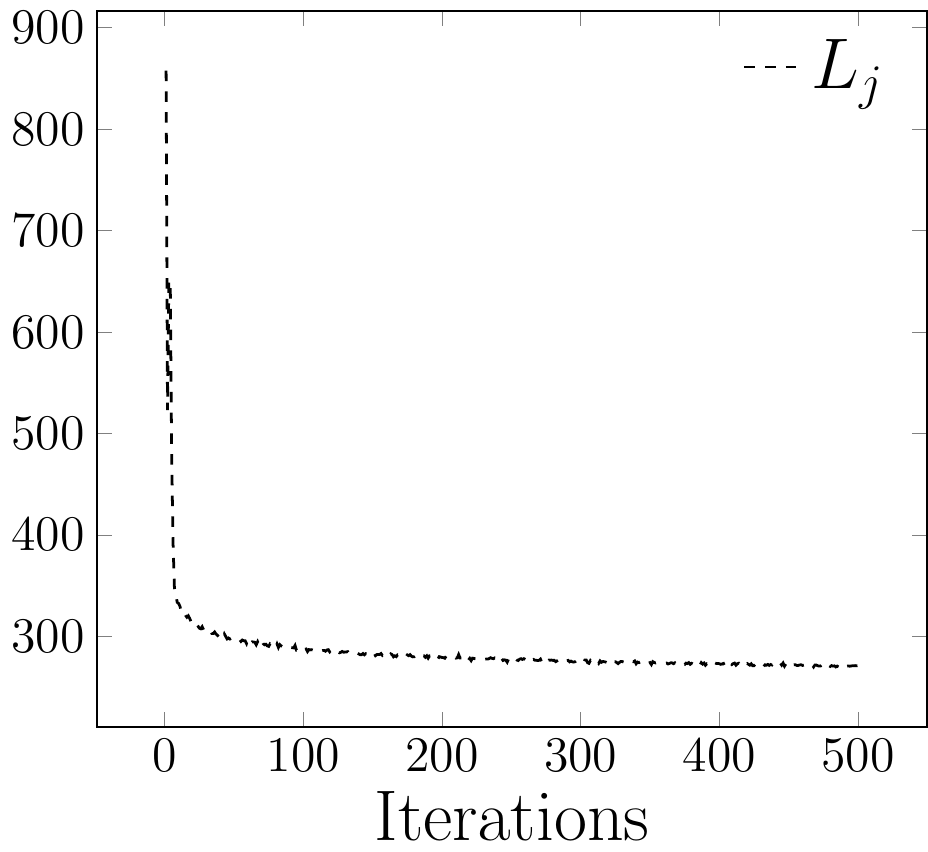}
\caption{Values obtained for the expression~\eqref{eq:L_j}.}
\label{fig:Lipschitz_j}
\end{figure} 

\subsection{Multiple shapes}
\label{subsec:multiple_shapes}

The main objective of this experiment is to show that the algorithm can also be applied to more realistic problems. In this case, we consider an ellipsoidal domain centered in the origin with major axis of length 1 and minor axis of length 0.5, containing three nonintersecting shapes to be identified, which may be understood as the cross-section of the human body containing the heart and lungs. The target shapes are depicted in~\Cref{fig:multiple-shapes} (left). 

\begin{figure}
\begin{center}
\includegraphics[height=3cm, width = 4.5cm]{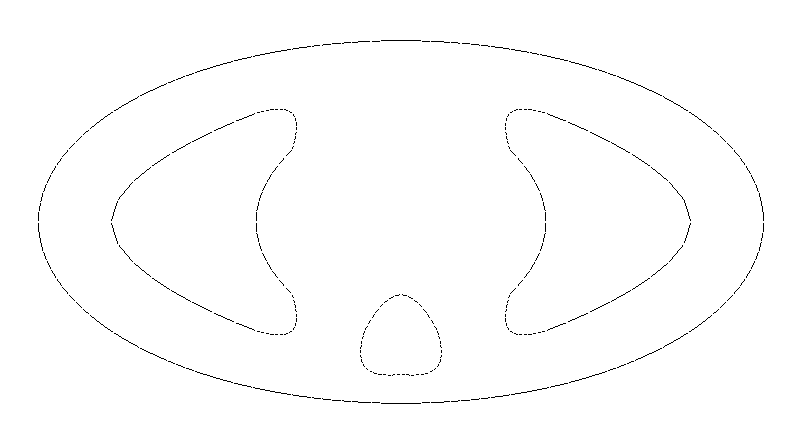}
\quad
\includegraphics[height=3cm, width = 5.5cm]{ybar.png}
\end{center}
\caption{Multiple shapes: target shapes (left) and input data $\bar{y}$ (right)}
\label{fig:multiple-shapes}
\end{figure}

The values of $\bar{y}$ were obtained as in the previous experiment, using the same values for $\kappaLungsdet$ and $\kappaTrunkdet$ and using $\kappaHeartdet=0.015$. This solution is depicted in~\Cref{fig:multiple-shapes} (right).  We mention that working with multiple shapes has its own theoretical difficulties. For one, the shape space over which one optimizes is a product space of $\UU$. One approach to solve a problem with multiples shapes would be to partition the domain $D$ into subdomains containing one shape each. This would however presume that we have prior knowledge as to the placement and number of shapes to be identified. Here, we assume we know the number of target shapes and show that our approach works even with multiple shapes.

The random parameters are assumed to be distributed as follows: 
\begin{itemize}
\item $\kappaHeart \sim \mathcal{N}(\kappaHeartdet,10^{-3},0.01,0.02)$
\item $\kappaLungs \sim \mathcal{N}(\kappaLungsdet,10^{-3},2.5 \cdot 10^{-3},7.5\cdot 10^{-3})$ 
\item $\kappaTrunk \sim \mathcal{N}(\kappaTrunkdet,10^{-3},0.7,1.3)$
\end{itemize}
The value of the parameter for the perimeter regularization is $\nu = 10^{-6}$. For the step-size rule we use $t_n = \tfrac{0.15}{n}$, and $\mu_{\min} = 5$ and $\mu_{\max}= 17$ are chosen for the Lam\'e parameter problem. The mesh has 3210 nodes and 6578 elements. We let the algorithm run 300 iterations. The initial, intermediate, and final shapes are shown in~\Cref{fig:iterations_multiple}. The experiment took 1.4130 minutes.

\begin{figure}
\begin{center}
\includegraphics[scale=0.13]{movie.0000.png}
\quad
\includegraphics[scale=0.13]{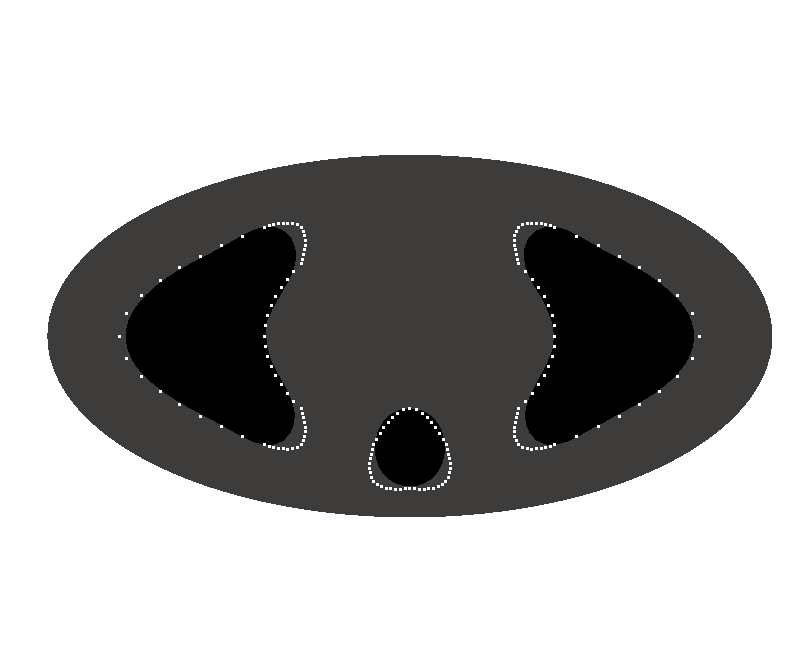}
\quad
\includegraphics[scale=0.13]{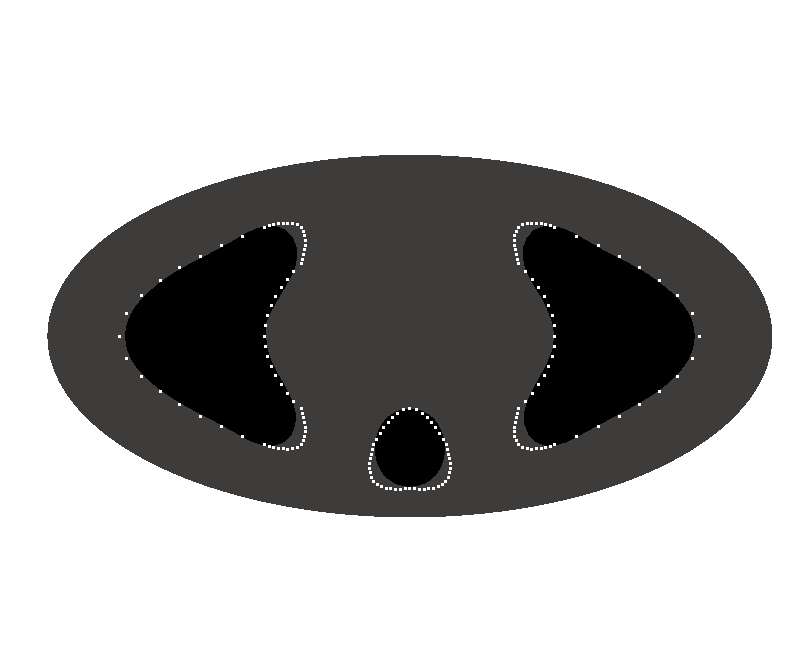}
\end{center}
\caption{Multiple shapes: initial (left), 150th (middle) and 300th (right)}
\label{fig:iterations_multiple}
\end{figure}

In~\cref{fig:multiple_decay} (left), we show the behavior of the objective function, in which we can appreciate the typical behavior of the stochastic gradient algorithm.  Again, we can observe the $H^1$-norm of the deformation field tending to zero in \cref{fig:multiple_decay} (right).

\begin{figure}
\begin{center}
\includegraphics[scale=0.5]{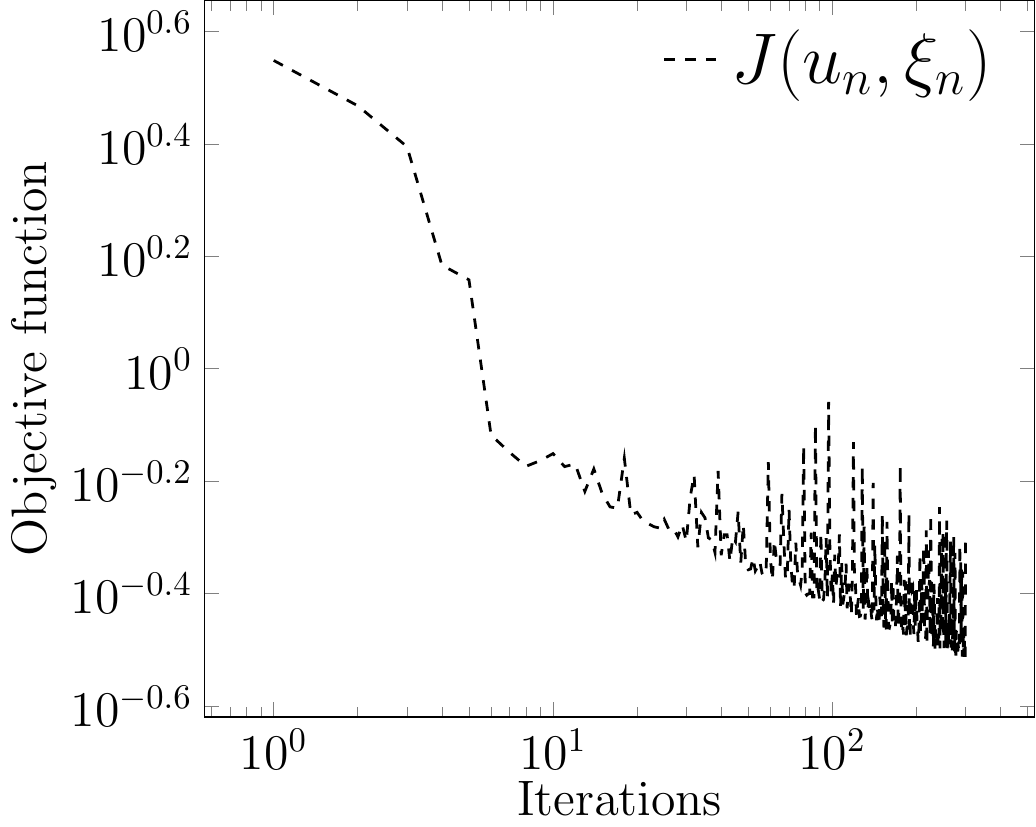}
\quad
\includegraphics[scale=0.5]{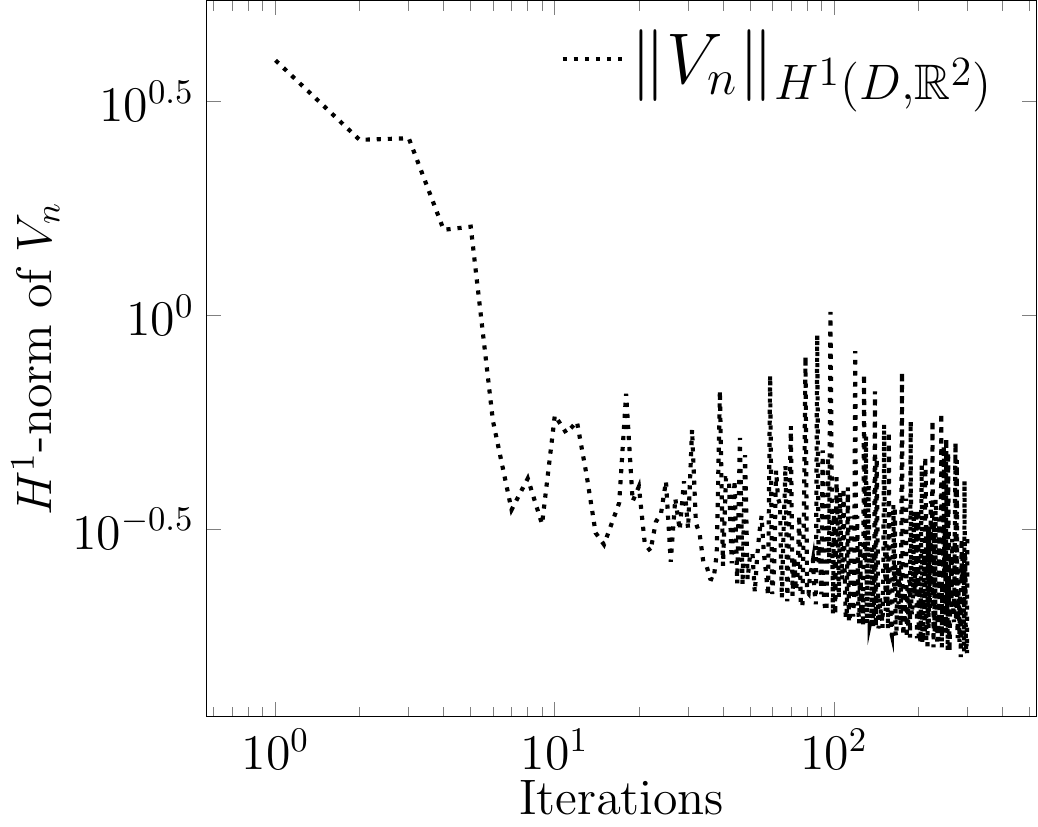}
\end{center}
\caption{Multiple shapes: behavior of the objective (left) and $H^1$-norm of the deformation field (right)}
\label{fig:multiple_decay}
\end{figure} 
 
\section{Conclusion}\label{sec:conclusion}
In this paper, we extended the classical stochastic gradient method to a novel approach for solving stochastic shape optimization problems on infinite-dimensional manifolds. Our work combines three research areas: stochastic optimization, shape optimization and infinite-dimensional differential geometry. We show convergence of the proposed method with the classical Robbins-Monro step-size rule. We introduced a model stochastic shape optimization problem based on interface identification, where parameters in the underlying PDE are subject to uncertainty. For this problem, we show shape differentiablility and the bound on the second moment of the stochastic gradient, which are necessary conditions for the convergence of the algorithm.
The modeling of uncertainty in shape optimization allows for more robust solutions in applications where parameters and inputs are not assumed to be known.

Two numerical experiments for the model problem were presented in the paper.   We observed the behavior of the stochastic gradient method in the form of the objective function and the gradient field, which on average decayed with the number of iterations. Additionally, we showed a simulation with the identification of multiple shapes, showing that the method can be applied for more complex models.

Since the connection of the above-mentioned three research areas is quite new, the results of this paper leave space for future research. In particular, there are a few open questions from differential geometry that are outside the scope of the paper but that came up while formulating our theory. It is still unclear whether \cref{assump:manifold} is satisfied for the manifold used in our application. In particular, we require connectivity and the existence of a bounded injectivity radius of the shape space under consideration. Additionally, while the shapes in our model problem are contained in a bounded domain, it is unclear under what conditions the iterates generated by the algorithm remain in a bounded set on the manifold as required by our theory. While we did not investigate higher-order shape differentiability, we note that convergence of the algorithm to stationary points is generally only possible with additional regularity. Finally, the choice of the step-size rule in stochastic approximation is still an active area of research for generally nonconvex problems.

\section*{Acknowledgments}
The authors would like the acknowledge one of our anonymous reviewers of the SIOPT Journal for the helpful comments on the handling of the infinite-dimensional setting. Moreover, we thank Martin Bauer (Florida State University, U.S.A.) for discussions about differential geometry and, in particular, the shape space $B_e$.

\appendix
\section{Well-posedness and bounds of the PDEs}
\label{appendix-A}
In this section, we prove various properties of the state and adjoint equations from the model problem in \cref{sec:model}.
\paragraph{Proof of \cref{lemma:existenceuniquenessoptimalcontrol-stochastic}}
Let $\tr(\cdot)$ denote the trace operator defined on $\partial D$. Let $a_\xi(y,v):=a_{\xi}^{\textup{in}}(y,v)+ a_{\xi}^{\textup{out}}(y,v)$, where $a_{\xi}^{i}(y,v):= \int_{D_i}  \kappa_i(\xi) \nabla y\cdot \nabla v \, \dx$ for $i \in \{\textup{in}, \textup{out}\}$ and $b_\xi(v) := \int_{\partial D} g(\xi) \tr(v) \, \ds$. Then the weak formulation of the boundary value problem \cref{eq:PDE1}-\cref{eq:jumpconditions} is: find $y = y(\xi) \in \hH$ such that
\begin{equation}
\label{wf}
a_\xi(y,v)=b_\xi(v)\quad \ \forall v\in \hH.
\end{equation}
Coercivity and boundedness of $a_\xi(\cdot,\cdot)$ are clear due to \cref{assumption1}. Therefore, by the Lax--Milgram lemma, there exists a unique solution $y=y(\xi)\in \hH$ to \cref{eq:PDE1}-\cref{eq:jumpconditions}. Let $y_g \in H_{\text{av}}^1(D)$ be such that $\tr( y_g) = g.$ Then, with the solution $y$ to \eqref{wf}, and the continuity of the trace mapping (with constant $C_{\tr}$),
\begin{equation}
\label{eq:inequality-rhs}
b_{\xi}(y) = a_\xi(y,y_g) \leq \kappa^{\max} |y|_{H^1(D)} |y_g|_{H^1(D)} \leq C_{\tr}\kappa^{\max} |y|_{H^1(D)} \lVert g\rVert_{H^{1/2}(D)}.
\end{equation}

The inequality \cref{eq:stateequation-apriori-estimate} follows from the Poincar\'{e} inequality with Poincar\'{e} constant $C_p$ and \eqref{eq:inequality-rhs}, since
\begin{equation}\label{eq:inequality-chain}
 \frac{\kappa_{\min}}{C_p^2+1} \lVert y \rVert_{H^1(D)} ^2 \leq a_\xi(y,y) = b_\xi(y) \leq C_{\tr} \kappa^{\max} \lVert y \rVert_{H^1(D)} \lVert g \rVert_{H^{1/2}(\Go)}.
\end{equation}

\paragraph{Proof of \cref{strong-convergence-state-variables}}
Let $V \in C_0^1(D,\R^2)$ be an arbitrary vector field and set $\kappa^t = \kappa \circ F_t^V$. We define the family of energy functionals over $\xi \in \Xi$ by $E_\xi: [0,\infty) \times H_{\text{av}}^1(D) \rightarrow \R$ such that
\begin{equation}
\label{eq:energy_functional}
E_\xi(t,\varphi)\coloneqq \dfrac12 \int_D \kappa^t(\xi) \, \eta(t)| (DF_t^V)^{-T} \nabla\varphi|^2\, \dx- \int_{\partial D} g(\xi)\varphi\, \ds.
\end{equation}
It is easy to show that for almost every $\xi$, $E_\xi$ is twice continuously differentiable with respect to $\varphi$ and the first and second order derivatives, denoted by $d_\varphi E_\xi$ and $d_\varphi^2E_\xi$, respectively, are given by the following expressions:
\begin{align*}
d_\varphi E_\xi(t,\varphi;\psi) &= \int_D \kappa^t(\xi)A(t)\nabla\varphi\cdot \nabla \psi\, \dx - \int_{\partial D} g(\xi)\psi\, \ds,\\
d_\varphi^2 E_\xi(t,\varphi;\psi,\theta) &= \int_D \kappa^t(\xi)A(t)\nabla \psi \cdot \nabla \theta\, \dx.
\end{align*}
where $A(t)\coloneqq \eta(t)(DF_t^V)^{-1}(DF_t^V)^{-T}$.
Now, we show that $y^t$ is the solution of~\eqref{eq:perturbed_equation}.
Using~\Cref{assumption1} (A1), we can bound the second derivative of the energy functional as follows
\begin{equation*}
d_\varphi^2 E_\xi(t,\varphi;\psi,\psi) \geq \kappa^{\min} \int_D A(t)\nabla \psi \cdot\nabla \psi\, \dx.
\end{equation*}
Thanks to~\cite[p.~526]{Delfour-Zolesio-2001}, we know that there exists $\tau$ small enough such that $A(t)$ is bounded. 
Thus, for all $t\in [0,\tau]$, 
\begin{equation}
\label{eq:coercivity_energy_functional}
d_\varphi^2 E_\xi(t,\varphi;\psi,\psi)  \geq c \lVert \psi\rVert^2_{H^1(D)}.
\end{equation}
With this, we have proven that the energy functional $E_\xi$ is strictly convex in $H_{\text{av}}^1(D)$ with respect to $\varphi$. Moreover, the functional is lower semicontinuous and radially unbounded, which allow us to conclude that the problem
\begin{equation}
\label{eq:min_energy}
\min_{\varphi\in H_{\text{av}}^1(D)} E_\xi(t,\varphi)
\end{equation}
has a unique solution for all $t\in[0,\tau_1]$. Then, is it easy to realize that the solution of problem~\eqref{eq:perturbed_equation} coincides to the solution of the problem~\eqref{eq:min_energy}, which can be characterized as $y^t$ satisfying
\begin{equation*}
d_\varphi E_\xi(t,y_t;\psi) = 0\quad \forall\psi\in \hH.
\end{equation*} 
Regarding the differentiability of the energy functional with respect to $t$, we proceed as follows. First of all, by using~\cite[Lemma~2.2]{Sturm2015:1}, we know that $A(t)$ is continuously differentiable for all $t\in[0,\tau_2]$ with $\tau_2>0$ small enough. Thus,
\begin{equation*}
d_{t,\varphi} E_\xi(t,\varphi;\psi)  = \int_D \kappa^t(\xi)A'(t)\nabla\varphi \cdot \nabla \psi\, \dx  + \int_D (\nabla\kappa(\xi)\cdot V) \, A(t)\nabla \varphi\cdot \nabla \psi \, \dx.
\end{equation*}
Since $\kappa(\xi)$ is piecewise constant, we have $\nabla \kappa = 0$ a.e., implying $d_{t,\varphi} E_\xi(t,\varphi;\psi) = \int_D \kappa^t(\xi)A'(t)\nabla\varphi \cdot \nabla \psi\, \dx$. Now, we notice that for $y^t_r := ry + (1-r)y^t$, it follows that
\begin{align*}
\int_0^1 d_\varphi^2E_\xi(t,y^t_r;y^t-y,y^t-y)\, \dr &=  d_\varphi E_\xi(t,y;y^t-y)-d_\varphi E_\xi(t,y^t;y^t-y)\\
&=d_\varphi E_\xi(t,y;y^t-y)-d_\varphi E_\xi(0,y;y^t-y)\\
&=td_{t,\varphi} E_\xi(tr_t,y;y^t-y),
\end{align*}
where we have use the fact that $y^t$ and $y$ are solutions of the problem~\eqref{eq:min_energy} for $t$ and $0$, respectively.  Furthermore, for the last inequality we have used the mean value theorem, which holds for $r_t\in(0,1)$. Then, on one hand thanks to~\Cref{assumption1} (A1) and the fact that $A'(t)$ is continuously differentiable on $[0,\tau_2]$ and therefore bounded for all $t\in[0,\tau_2]$, we have that 
\begin{equation*}
d_{t,\varphi} E_\xi(r,\varphi;\psi) \leq c \int_D \nabla\varphi \cdot \nabla\psi\,\dx \leq c \lVert \varphi\rVert_{H^1(D)} \lVert\psi\rVert_{H^1(D)}.
\end{equation*}
Using this bound together with~\eqref{eq:coercivity_energy_functional} we get that 
\begin{equation}
\label{eq:bound_yt_y}
c\lVert y^t-y\rVert_{H^1(D)}^2\leq t c \, \lVert y\rVert_{H^1(D)}\lVert y^t-y\rVert_{H^1(D)}
\end{equation}
from which we get the desired inequality for $\tau=\min\{\tau_1,\tau_2\}$. The 
final result is obtained by using~\eqref{eq:stateequation-apriori-estimate}.

\paragraph{Proof of 
\cref{lemma:existenceuniquenessadjoint-stochastic}}
Using analogous arguments as in the proof for \cref{lemma:existenceuniquenessoptimalcontrol-stochastic}, the Lax-Milgram Lemma guarantees the existence of a unique solution $p=p(\xi) \in \hH$ to \eqref{eq:adjoint-weak}.
The inequality \cref{eq:adjointequation-apriori-estimate} comes from \begin{align*}
 \frac{\kappa_{\min}}{C_p^2+1} \lVert p(\xi) \rVert_{H^1(D)}^2 \leq \lVert y(\xi)- \bar{y} \rVert_{L^2(D)} \lVert p(\xi) \rVert_{H^1(D)}.
\end{align*}

\section{Shape differentiability}
\label{appendix-B}
We now prove shape differentiability of $J(u,\xi)$ for a fixed realization (see \Cref{definition_ShapeDer}). Following the averaged adjoint method from~\cite{Sturm2015:1}, let us start by considering the function $\mathcal{L}_\xi:[0,\tau]\times H_{\text{av}}^1(D)\times H_{\text{av}}^1(D) \rightarrow \R$ via
\begin{equation}
\label{eq:LL_definition}
\mathcal{L}_\xi(t,\varphi,\psi) =  \dfrac12 \int_D \eta(t)(\varphi - \bar{y}^t)^2 \, \dx + \int_D \kappa^t(\xi) A(t)\nabla \varphi \cdot \nabla \psi \, \dx  - \int_{\partial D} g(\xi) \psi \, \ds,
\end{equation}
where we use the subscript $\xi$ for the dependence of the function on a fixed but arbitrary realization and $\tau>0$ is a constant that is small enough (to be determined during the proof). Moreover, this function can be also rewritten in terms of the energy functional described in~\eqref{eq:energy_functional} as follows:
\begin{equation}
\label{eq:LL_characterization}
\mathcal{L}_\xi(t,\varphi,\psi) = \dfrac12 \int_D \eta(t)(\varphi - \bar{y}^t)^2 \, \dx + d_\varphi E_\xi(t,\varphi;\psi).
\end{equation}
\paragraph{Proof of \cref{thm:shape_differentiability}}
Since many of these computations are similar to~\cite[Theorem~4.6]{Sturm2015:1}, we will simply sketch the arguments. We set $u^t\coloneqq u \circ F_t^V$, $y^t\coloneqq y \circ F_t^V$, $p^t\coloneqq p \circ F_t^V$, $\bar{y}^t \coloneqq \bar{y}\circ F_t^V$ and $\kappa^t \coloneqq \kappa \circ F_t^V$. In the following, $\xi \in \Xi$ is arbitrary but fixed, and $\tau>0$ is chosen to be small enough. 

Let us start by considering the following: for all $t\in[0,\tau]$ and $\tilde{p}\in H^1_{\text{av}}(D)$, the mapping
\[
[0,1]\rightarrow \R\colon s\mapsto \mathcal{L}_\xi(t,sy^t+(1-s)y^0,\tilde{p})
\]
is absolutely continuous thanks to the characterization~\eqref{eq:LL_characterization} and the fact that in \cref{strong-convergence-state-variables}, we proved the function $E_\xi(t,\varphi)$ is twice continuously differentiable. Additionally, for all $t\in[0,\tau]$, $\varphi \in \hH$, and $\tilde{p}\in \hH$, 
\[
s\mapsto d_{\varphi}\mathcal{L}_\xi(t,sy^t+(1-s)y^0,\tilde{p};\varphi)
\]
is well-defined and belongs to $L^1(0,1)$. With that, Assumption (H0) of~\cite[Sec.~3.1]{Sturm2015:1} is fulfilled.

Additionally, we consider the solution set of the state equation for $t \in [0,\tau]$, given by
\begin{equation*}
\E(t)\coloneqq  \{y\in H_{\text{av}}^1(D) \,|\, d_{\psi}\mathcal{L}_\xi(t,y,0;\hat{\psi})=0 \quad \forall \hat{\psi}\in H_{\text{av}}^1(D)\},
\end{equation*}

For $t\in[0,\tau]$, $y^t\in \E(t)$ and $y^0\in \E(0)$, we define the solution set of the \textit{averaged adjoint equation} with respect to $t,y^t$, and $y^0$ via 
\begin{equation*}
\YY(t,y^t,y^0)\coloneqq \left\{q\in \hH \, \Big|  \int_0^1d_{\varphi}\mathcal{L}_\xi(t,sy^t+(1-s)y^0,q;\hat{\varphi}) \,\ds = 0 \,\,\forall \hat{\varphi} \in \hH \right\}.
\end{equation*}
 Furthermore, for $t=0$ the set $\YY(0,y^0)\coloneqq \YY(0,y^0,y^0)$ coincides with the solution set of the usual adjoint equation, i.e.
\[
\YY(0,y^0) = \left\{q \in \hH \, | \,d_\varphi \mathcal{L}_\xi(0,y^0,q;\hat{\varphi})=0 \quad\forall \hat{\varphi}\in \hH\right\}.
\] 

Now, we will prove the following statements:
\begin{enumerate}[label=\emph{(H\arabic*)}]
\item For all $t\in[0,\tau]$ and all $(y,p)\in \E(0)\times \hH$, the derivative $d_t \mathcal{L}_\xi(t,y,p)$ exists.
\item For all $t\in[0,\tau]$, the set $\YY(t,y^t,y^0)$ is nonempty and $\YY(0,y^0)$ is single-valued.
\item Let $p^0 \in \YY(0,y^0)$. For every sequence $\{t_n\}$ of nonnegative real numbers converging to zero, there exists a subsequence $\{t_{n_k}\}$ such that for all $k$, 
\[
p^{t_{n_k}}\in \YY(t_{n_k},y^{t_{n_k}},y^0)\quad \text{ and }\quad
\lim_{\substack{k\to\infty\\ s\searrow 0}} d_t \mathcal{L}_\xi(s,y^0,p^{t_{n_k}}) = d_t \mathcal{L}_\xi(0,y^0,p^0).
\]
\end{enumerate}

Condition \emph{(H1)} is satisfied as a byproduct of \cref{strong-convergence-state-variables}, since we obtained that the set $\E(t)$ is single-valued for all $t \in [0,\tau]$. Moreover, the function $d_\varphi E_\xi(t,\varphi;\psi)$ is continuously differentiable in $t$ for all $t\in[0,\tau]$. Thanks to~\cite[Lemma~2.1]{Sturm2015:1} we know that $\eta(t)$ is continuously differentiable and therefore we obtain the differentiability of $\mathcal{L}_\xi(t,u,p)$ with respect to $t$ for all $y\in\E(0)$ and $p\in H^1_{\text{av}}(D)$.

Now, we analyze condition \emph{(H2)}. For this, we consider the equation
\begin{align*}
0 	&= \int_0^1d_\varphi \mathcal{L}_\xi(t,ry^t+(1-r)y^0,q;\varphi)\, \dr \\
	&=\int_0^1\int_D \eta(t)(y^0+r(y^t-y^0)-\bar{y}^t)\varphi \, \dx\, \dr+ \int_0^1\int_D\kappa^t(\xi) A(t)\nabla \varphi \cdot\nabla q \, \dx \, \dr.
\end{align*}
By rearranging terms, and integrating with respect to $r$, we obtain the following variational problem: find $q \in H^1_{\text{av}}(D)$ such that
\begin{equation}
\label{eq:averaged_adjoint_weak}
\begin{split}
&\int_D \kappa^t(\xi) A(t)\nabla q \cdot\nabla \varphi\, \dx \\&= -\int_D\eta(t)\left(y^0 + \frac{1}{2}(y^t-y^0)-\bar{y}^t\right)\, \varphi\, \dx \quad\forall \varphi\in H^1_{\text{av}}(D).
\end{split}
\end{equation}
The bilinear form associated with the left-hand side is coercive thanks to~\eqref{eq:coercivity_energy_functional} and the right-hand side makes up a bounded linear form. Then, thanks to the Lax-Milgram lemma, we obtain the existence and uniqueness of solutions, which can be understood as the set $\YY(t,y^t,y^0)$ for all $t\in[0,\tau]$.  In the special case when $t=0$, the solution coincides with the adjoint problem given in~\eqref{eq:adjoint-weak}.

Finally, for the verification of condition \emph{(H3)}, we will prove the following: 
\emph{For every sequence $\{t_n\}$ of nonnegative real numbers converging to zero, there is a subsequence $\{t_{n_k}\}$ such that $\{p^{t_{n_k}}\}$, where $p^{t_{n_k}}$ solves~\eqref{eq:averaged_adjoint_weak} with $t=t_{n_k}$, converges weakly in $\hH$ to the solution $p$ of the adjoint equation~\eqref{eq:adjoint-weak}}.
Therefore, we consider $p^0\in \YY(0,y^0)$ and a nonnegative sequence $\{t_n\}$ converging to zero. Thanks to the verification of condition \emph{(H2)}, we know that there exists a solution for~\eqref{eq:averaged_adjoint_weak}. If we use in particular $\varphi=p^t$ as a test function we get 
\begin{align*}
c \,  \lVert p^t\rVert^2_{H^1(D)}&\leq \int_D \kappa^t A(t)\nabla p^t \cdot \nabla p^t \, \dx= -\int_D\eta(t)(y^0 + \frac{1}{2}(y^t-y^0)-\bar{y}) p^t\, \dx \\
&\leq c \left[\lVert y^0\rVert_{H^1(D)} + \frac12\lVert y^t-y^0\rVert_{H^1(D)}+\lVert \bar{y}\rVert_{H^1(D)}\right]\lVert p^t\rVert_{H^1(D)}\\
&\leq c \left[\lVert y^0\rVert_{H^1(D)} + c\tau \lVert y^0\rVert_{H^1(D)} + \lVert \bar{y}\rVert_{H^1(D)} \right]\lVert p^t\rVert_{H^1(D)},
\end{align*} 
where we have used the Poincar\'{e} inequality, and the boundedness of $\eta(t)$ and $A(t)$ given in~\cite[p.~526]{Delfour-Zolesio-2001} together with~\Cref{assumption1}~(A1) and~\cref{strong-convergence-state-variables}. For the second line, we have used H\"{o}lder's inequality and finally for the third line~\Cref{strong-convergence-state-variables} and the fact that $t\in[0,\tau]$. We conclude that the sequence $\{p^{t_n}\}$ is bounded and therefore we can extract a weakly convergent subsequence, and we denote its weak limit by $w\in\hH$. On the other hand, by \eqref{eq:averaged_adjoint_weak}, for all $k$ we have
\begin{equation*}
\int_D\kappa^{t_{n_k}}(\xi) A(t_{n_k})\nabla p^{t_{n_k}}\cdot \nabla \varphi \, \dx + \int_D\eta(t_{n_k})\left(y^0+\frac12 (y^{t_{n_k}}-y^0)-\bar{y}\right)\varphi \, \dx=0.
\end{equation*} 
Taking the limit as $k\to \infty$, since $y^{t_{n_k}}\to y^0$ in $\hH$, we get
\begin{equation*}
\int_D \kappa(\xi) \nabla w \cdot \nabla \varphi \, \dx + \int_D (y^0-\bar{y})\varphi \, \dx= 0.
\end{equation*}
Since $\YY(0,y^0)$ is single-valued, we conclude that $w = p^0$. Finally, we note that for a fixed $\varphi\in\hH$ the mapping $(t,\psi)\mapsto d_t\mathcal{L}_\xi(t,\varphi,\psi)$ is weakly continuous, from which we conclude that condition \emph{(H3)} is satisfied. 

\bibliographystyle{plain}
\bibliography{references}
\end{document}